\theoremstyle{definition}
\newtheorem{theorem}{Theorem}[section]
\newtheorem{prop}[theorem]{Proposition}
\theoremstyle{definition}
\newtheorem{conj}[theorem]{Conjecture}
\newtheorem{definition}[theorem]{Definition}
\newtheorem{claim}[theorem]{Claim}
\theoremstyle{remark}
\newtheorem{rmk}[theorem]{Remark}
\newcommand{\F}{\mathbb{F}}
\newcommand{\Z}{\mathbb{Z}}
\newcommand{\R}{\mathbb{R}}
\newcommand{\Zt}{\mathbb{Z}^t}
\newcommand{\A}{\mathcal{A}}
\newcommand{\X}{\mathcal{X}}
\newcommand{\I}{\mathcal{I}}
\newcommand{\C}{\mathbb{C}}
\newcommand{\vG}{G^\vee}
\newcommand{\cO}{\mathcal{O}}
\newcommand{\cK}{\mathcal{K}}
\newcommand{\Gr}{\mathrm{Gr}}
\newcommand{\val}{\operatorname{val}}
\newcommand{\Conf}{\operatorname{Conf}}
\newcounter{Qcount}
\begin{document}

\stepcounter{Qcount}

\title{Intersection Pairings for Higher Laminations}
\author{Ian Le}
\address{Perimeter Institute for Theoretical Physics\\ Waterloo, ON N2L 2Y5}
\email{ile@perimeterinstitute.ca}

\begin{abstract}

One can realize higher laminations as positive configurations of points in the affine building \cite{Le}. The duality pairings of Fock and Goncharov \cite{FG1} give pairings between higher laminations for two Langlands dual groups $G$ and $G^{\vee}$. These pairings are a generalization of the intersection pairing between measured laminations on a topological surface.

We give a geometric interpretation of these intersection pairings. In particular, we show that they can be computed as the length of minimal weighted networks in the building. Thus we relate the intersection pairings to the metric structure of the affine building. This proves several of the conjectures from \cite{LO}

The key tools are linearized versions of well-known classical results from combinatorics, like Hall's marriage lemma, Konig's theorem, and the Kuhn-Munkres algorithm.

\end{abstract}

\maketitle

\tableofcontents

\section{Introduction}

We begin by recalling some classical theorems from combinatorics. We seek to generalize these theorems by proving linearized versions of them.

Let us recall the marriage problm. Consider the sets $S_1, S_2, \dots, S_r$. The marriage problem asks whether one can find an elements $x_i$ in each of the sets $S_i$ such that $x_i \neq x_j$ for $i \neq j$. In this situation, $x_1, \dots, x_r$ will be called a \emph{system of distinct representatives} for the sets $S_i$.

\emph{Hall's theorem} or \emph{Hall's Marriage Lemma} gives a necessary and sufficient condition for a system of distinct representatives to exist:
\begin{theorem}[Hall]
Let $S_1,S_2,\ldots,S_r$ be sets. A system of distinct representatives of the sets $S_i$ exists if and only if
$$  \left|\bigcup_{i \in I} S_i\right| \geq |I|. $$
for each subset $I \subseteq [r]$.
\end{theorem}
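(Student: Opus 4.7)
Necessity is immediate: restricting an SDR $x_1,\ldots,x_r$ to any $I \subseteq [r]$ gives $|I|$ distinct elements of $\bigcup_{i\in I} S_i$. The content is sufficiency, which I would prove by strong induction on $r$; the base case $r=1$ reduces to $S_1 \neq \emptyset$, which is exactly Hall's hypothesis for $I = \{1\}$.

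For the inductive step I would dichotomize on whether some proper nonempty $I \subsetneq [r]$ achieves equality in Hall's inequality. If no such $I$ exists (all proper inequalities are strict, so there is a ``unit of slack''), pick any $x_r \in S_r$ and replace each $S_i$ for $i<r$ by $S_i \setminus \{x_r\}$. Removing a single element drops each union size by at most one, so the strict Hall inequalities for the subfamily indexed by $[r-1]$ become (possibly non-strict) Hall inequalities for the shrunken family; induction then extends $x_r$ to a full SDR.

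If instead some proper $I \subsetneq [r]$ is tight, meaning $|\bigcup_{i \in I} S_i| = |I|$, set $T := \bigcup_{i\in I} S_i$. By induction the sets $\{S_i\}_{i\in I}$ have an SDR, which must exhaust $T$. For $j \notin I$ define $S_j' := S_j \setminus T$; the inherited Hall condition for the complementary family follows from
$$ \left|\bigcup_{j \in J} S_j'\right| = \left|\bigcup_{j \in J \cup I} S_j\right| - |T| \geq |J \cup I| - |I| = |J|, $$
for every $J \subseteq [r]\setminus I$, using Hall's hypothesis on the original family at $J \cup I$. A second application of induction produces an SDR for the $S_j'$, disjoint from $T$ by construction, and the two partial SDRs combine into the desired one. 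The main obstacle is the tight case: one needs the foresight to peel off the block $T$ and verify that doing so preserves Hall's condition on the remaining family; the strict/tight dichotomy is what makes the induction go through cleanly.
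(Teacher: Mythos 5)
Your proof is correct and complete. The paper only states Hall's theorem as a classical black box and offers no proof, so there is nothing internal to compare against; what you give is the standard Halmos--Vaughan induction, and every step checks out. In the ``slack'' case, the strict inequality $\bigl|\bigcup_{i\in I} S_i\bigr| \geq |I|+1$ for proper nonempty $I$ exactly absorbs the one-element loss from deleting $x_r$, so the subfamily on $[r-1]$ still satisfies Hall. In the ``tight'' case, the identity $\bigl|\bigcup_{j\in J} S_j'\bigr| = \bigl|\bigcup_{j\in J\cup I} S_j\bigr| - |T|$ is the crux, and it holds because $\bigcup_{j\in J\cup I} S_j = \bigl(\bigcup_{j\in J} S_j\bigr) \cup T$ with inclusion--exclusion applied; your chain of inequalities then correctly uses Hall on $J\cup I$ together with disjointness of $J$ and $I$. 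Both recursive calls are on index sets of size strictly less than $r$ (since $I$ is proper and nonempty), so the strong induction is sound.

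One small remark worth adding for the reader: the theorem of Konig stated immediately after in the paper subsumes Hall's theorem (taking the right-hand side to be $r$ forces $|I|$ to dominate, which is exactly Hall's condition), and the paper's later Proposition \ref{linKonig} is the linearized analogue of Konig rather than of Hall. So while your inductive proof of Hall is clean and self-contained, an alternative route more aligned with the paper's own machinery would be to prove Konig first (e.g., via augmenting paths or via the max-flow min-cut theorem) and deduce Hall as the special case. Your approach is more elementary and does not require introducing matchings or vertex covers; the Konig-first route buys the quantitative refinement that the paper actually linearizes.
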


Hall's theorem can be phrased in terms of bipartite graphs. Let us imagine our bipartite graph with vertices divided into two sets, one set of vertices on the left and one set of vertices on the right. On the left, we may put $r$ vertices and label them $1, \dots, r$. On the right, we put all elements of the sets $S_i$. We can then draw edges connecting each vertex $i$ on the left to all the elements in $S_i$. Then finding a system of distinct representatives is the same as finding a matching in this graph using all the vertices $1, \dots, r$.

A refinement of Hall's theorem is \emph{Konig's theorem.} Instead of giving conditions for a matching using all $r$ vertices to exist, it provides a formula for the maximum size of a matching:
\begin{theorem}[Konig]
Let $S_1,S_2, \dots,S_r$ be sets. The maximum number of distinct representatives of the sets $S_i$ is
$$  \min_{I \subseteq [r]} \left( \left| \cup_{i\in I} S_i \right| + r - |I| \right). $$
\end{theorem}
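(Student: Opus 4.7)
The plan is to prove the two inequalities separately, deriving the upper bound by a direct counting argument and the lower bound by reducing to Hall's theorem via a padding construction.

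For the upper bound, I would fix any choice of distinct representatives for a subset of the indices---i.e., a matching of some size $N$ in the associated bipartite graph---together with any $I \subseteq [r]$. The matched indices lying in $I$ must draw representatives from $\cup_{i \in I} S_i$, contributing at most $|\cup_{i \in I} S_i|$ to $N$, while the indices outside $I$ contribute at most $r - |I|$. Hence $N \leq |\cup_{i \in I} S_i| + r - |I|$, and minimizing over $I$ yields $N \leq \min_I (|\cup_{i \in I} S_i| + r - |I|)$.

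For the reverse inequality, I would reduce to Hall's theorem by padding. Let $m$ denote the right-hand side and set $d = r - m$; observe that $d \geq 0$, since taking $I = \emptyset$ in the definition of $m$ gives $m \leq r$. Introduce auxiliary ``dummy'' elements $y_1, \dots, y_d$ disjoint from $\cup_i S_i$, and enlarge each set to $S_i' = S_i \cup \{y_1, \dots, y_d\}$. For any $I \subseteq [r]$, the defining inequality for $m$ rearranges to $|\cup_{i \in I} S_i| \geq |I| - d$, so $|\cup_{i \in I} S_i'| = |\cup_{i \in I} S_i| + d \geq |I|$, verifying Hall's condition for the enlarged family. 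Applying Hall's theorem then yields a system of distinct representatives $x_1', \dots, x_r'$ for the $S_i'$. Since there are only $d$ dummies and the $x_i'$ must be distinct, at most $d$ of them are dummies; discarding the dummy slots leaves a system of distinct representatives for at least $r - d = m$ of the original sets.

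The only nontrivial step is the verification of Hall's condition for the padded family, which has been engineered so that equality holds on the extremal $I$ achieving the minimum. Beyond this the argument is bookkeeping, and I expect no serious obstacle given Hall's theorem in the stated form. The value of the reduction is precisely that it presents Konig's theorem as a clean corollary of Hall via ``padding with slack,'' a template that should also suggest how to linearize the statement in the sequel.
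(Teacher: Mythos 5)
The paper states K\"onig's theorem as classical background and does not prove it; the theorem is invoked only as motivation for its linearized analogue (Proposition~\ref{linKonig}), whose proof is delegated to \cite{LO}. So there is no ``paper's own proof'' to compare against, and I will assess your argument on its own.

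Your proof is correct and is the standard deduction of the defect form of Hall's theorem. The upper bound is a clean double count: in any matching of size $N$, the indices in $I$ are constrained to a pool of $|\cup_{i\in I}S_i|$ distinct elements, while the indices outside $I$ number at most $r-|I|$, giving $N \le |\cup_{i\in I}S_i| + r - |I|$ for every $I$. The lower bound via padding with $d = r - m$ universal dummies is exactly right: the inequality $|\cup_{i\in I}S_i| \ge |I| - d$ for all $I$ is a direct rearrangement of the definition of $m$, and adding $d$ fresh elements to every $S_i$ inflates each nonempty union by exactly $d$, restoring Hall's condition. One microscopic wrinkle you glide past: the identity $|\cup_{i\in I}S_i'| = |\cup_{i\in I}S_i| + d$ fails for $I=\emptyset$ (the left side is $0$, not $d$), but Hall's condition at $I=\emptyset$ is vacuous, so nothing is lost. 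Your closing remark that the padding template should linearize is apt, though the linearized argument in \cite{LO} has to contend with the fact that ``disjoint dummy directions'' are less canonical in a vector space than fresh set elements; that is where the genuine extra work lies in Proposition~\ref{linKonig}.
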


This can be interpreted in terms of bipartite graphs as follows. Suppose that $I \subset r$ is chosen to minimize $\left( \left| \cup_{i\in I} S_i \right| + r - |I| \right).$ Then we can find a vertex cover of the graph (a set of vertices of the graph such that each edge is incident to one of these vertices) by taking all the vertices correpsonding to the elements in $\cup_{i\in I} S_i$ and all the vertices in $[r] \backslash I$. This is a vertex cover of size $\left| \cup_{i\in I} S_i \right| + r - |I|$. It is clear that any vertex cover must be larger than the size of any matching. Konig's theorem says that the maximal matching has the same size as the minimal vertex cover. Note that if we fix a minimal vertex cover, any maximal matching must use each vertex of the minimal cover exactly once. (Also, if we fix a maximal matching, any minimal cover must cover exactly one of the vertices of each edge in the matching.)

Finally, Konig's theorem can be used to prove a theorem of Kuhn and Munkres. Let $[c_{ij}]$ be an $n \times n$ real matrix. A transversal of $[c_{ij}]$ is a choice of $n$ entries of the matrix, one entry in each row and each column. There are $n!$ such transversals. For each transversal, we can consider the sum over that transversal. We would like to find the maximal possible value for this sum.

Suppose we have some real numbers $a_i$ and $b_j$ for $1 \leq i,j \leq n$ such that $a_i + b_j \geq c_{ij}$. We can call the set of real numbers $a_i$ and $b_j$  a \emph{potential}. It is clear that the sum of any transversal is less than or equal to $\sum_i a_i + \sum_j b_j$. Thus the sum of any transversal is less than the sum of any potential. Then we have the following theorem:

\begin{theorem}[Kuhn and Munkres]
Let $[c_{ij}]$ be a real $n\times n$ matrix. Then the maximal sum of a transversal of $[c_{ij}]$ equals the minimal sum $\sum_i a_i + \sum_j b_j$, where we require that for all $i, j$,  $a_i + b_j \geq c_{ij}$. Moreover, if the $c_{ij}$ are integers, the $a$'s and $b$'s can be taken to be integral as well.
\end{theorem}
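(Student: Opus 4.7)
The plan is to derive Kuhn--Munkres directly from Konig's theorem by the standard ``equality subgraph'' argument (this is the core step of the Hungarian algorithm). Weak duality --- that every transversal sum is at most every potential sum --- is already stated, so only the reverse inequality is needed. I would first argue that an optimal potential exists: the feasible set $\{(a,b) : a_i+b_j\geq c_{ij}\}$ is nonempty (take $a_i=\max_j c_{ij}$, $b_j=0$) and, after normalizing by fixing $b_n=0$, the sublevel sets of $\sum a_i+\sum b_j$ are closed and bounded (bounded below by weak duality, bounded above by the chosen starting point), so a minimizer $(a,b)$ exists. In the integer case one can replace this compactness step by the observation that the algorithmic update below strictly decreases an integer-valued objective and hence must terminate.

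Given a minimizer $(a,b)$, form the bipartite \emph{equality subgraph} $H$ on vertex sets $[n]\sqcup[n]$ in which $(i,j)$ is an edge precisely when $a_i+b_j=c_{ij}$. The key claim is that $H$ has a perfect matching; if so, the $n$ matching edges give a transversal whose sum is exactly $\sum a_i+\sum b_j$, matching the potential sum and finishing the proof.

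Suppose for contradiction that $H$ has no perfect matching. By Konig's theorem applied to $H$, there is a vertex cover $L\cup R$ with $L\subseteq[n]$ on the left, $R\subseteq[n]$ on the right, and $|L|+|R|<n$. Uncovered pairs $(i,j)$ with $i\notin L$, $j\notin R$ carry no edge of $H$, so $a_i+b_j>c_{ij}$ for all such pairs. Set
$$\epsilon \;=\; \min_{i\notin L,\; j\notin R}(a_i+b_j-c_{ij}) \;>\; 0,$$
and define a new potential $(a',b')$ by $a'_i=a_i-\epsilon$ for $i\notin L$ and $a'_i=a_i$ otherwise, and $b'_j=b_j+\epsilon$ for $j\in R$ and $b'_j=b_j$ otherwise. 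A four-case check shows $a'_i+b'_j\geq c_{ij}$ everywhere (the only decrease occurs in the uncovered block, where $\epsilon$ was chosen to preserve feasibility). The change in the objective is
$$\big(\sum a'_i+\sum b'_j\big)-\big(\sum a_i+\sum b_j\big) \;=\; -\,(n-|L|)\epsilon + |R|\epsilon \;=\; \big(|L|+|R|-n\big)\epsilon \;<\; 0,$$
contradicting optimality of $(a,b)$. Hence $H$ has a perfect matching, proving the equality.

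For the integrality statement, observe that if all $c_{ij}$ are integers and the starting potential is integral (e.g.\ $a_i=\max_j c_{ij}$, $b_j=0$), then at each update step $\epsilon$ is a positive integer, so the potential remains integral throughout. The procedure strictly decreases the integer objective $\sum a_i+\sum b_j$, which is bounded below by weak duality, hence terminates, and termination occurs exactly when the equality subgraph admits a perfect matching. The main obstacle is really the existence of an optimizer in the real case; once the ``use Konig on the equality subgraph and re-weight'' move is in hand, the rest is a direct verification.
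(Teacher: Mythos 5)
The paper does not actually prove this statement: the Kuhn--Munkres theorem is presented as classical background and the text only remarks that ``Konig's theorem can be used to prove'' it and that ``the theorem above is actually an algorithm,'' i.e.\ it gestures at the Hungarian-algorithm argument without spelling it out. Your proof is exactly that argument --- take an optimal potential, form the equality subgraph, apply Konig, and reweight along the complement of a small vertex cover to contradict optimality --- and it is correct; it is also structurally the same iteration that the paper later carries out for its linearized main theorem (modify $L$ to $L'=t^{-1}W+L$ using a linear Konig theorem), so you have landed on precisely the mechanism the paper wants the reader to have in mind.

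One soft spot worth tightening: your justification for the existence of a minimizer in the real case conflates boundedness of the objective with boundedness of the sublevel set. Saying the sublevel set is ``bounded below by weak duality, bounded above by the chosen starting point'' bounds the \emph{value} $\sum a_i+\sum b_j$, not the variables themselves, so it does not by itself give compactness. The set (after fixing $b_n=0$) is in fact bounded --- its recession cone is trivial, since a recession direction $(\delta a,\delta b)$ must satisfy $\delta a_i\ge 0$, $\delta a_i+\delta b_j\ge 0$, $\delta b_n=0$, and $\sum\delta a_i+\sum\delta b_j\le 0$, and summing the middle inequalities along any permutation forces everything to vanish --- but this needs a separate argument (or one can invoke LP strong duality for the assignment polytope, or, as you note, sidestep the issue entirely in the integer case by termination of the algorithm). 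Also, minor: the paper phrases Konig in the language of systems of distinct representatives, while you use the max-matching $=$ min-vertex-cover form; these are equivalent but you should say you are using that reformulation.
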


In fact, the theorem above is actually an algorithm for constructing both the transversal and the potential. Once we find a transversal that is equal to the sum of the potential, then we know we have found both the maximal transversal and the minimal potential. Alternatively, we may interpret this as saying that the potential witnesses the fact that we have found a maximal potential.

Replacing $[c_{ij}]$ by $[-c_{ij}]$ we can prove a similar theorem that the minimal transversal is equal to the maximal sum of a potential $a_i$ and $b_j$ where $a_i + b_j \leq c_{ij}$.

Note that in the theorems of both Konig and Kuhn-Munkres, we have that the maximum of one quantity is equal to the minimum of some other quantity. The linear generalizations which we will consider will also share this feature.

We now summarize the contents of this paper. In Section 2, we will discuss linear genearlizations of Hall's and Konig's theorems. In Section 3, we will introduce and prove a linearized version of the Kuhn-Munkres theorem. In Section 4, we will a definition of higher laminations, and describe some intersection pairings. In Section 5, we show how our generalization of the Kuhn-Munkres theorem can be applied to give an interpretation of intersection pairings of higher laminations in terms of the metric geometry of the affine building.

\medskip
\noindent{\bf Acknowledgments} I am grateful to Po-Shen Loh and Aaron Pixton for past discussions about the problems discussed in this paper, to Chris Fraser for his comments on a draft of this paper, and to Jim Geelen and Kazuo Murota noting the resemblance of the theorems in this paper to those in discrete convex analysis. Finally, I thank to Evan O'Dorney for our collaboration \cite{LO}, from which many of the ideas in this paper originated.

\section{Linearization}

We will need a linearization of Hall's marriage theorem. This theorem is a specialization of Rado's theorem on matroids, and was rediscovered by Moshonkin \cite{R}, \cite{M}:
\begin{theorem}[Rado] 
Let $W_1, W_2, \dots, W_r$ be subspaces of an ambient vector space $V$. Then a system of linearly independent representatives of the $W_i$ exists if and only if for each subset $I \subseteq [r]$,
$$ \dim \sum_{i \in I} S_i \geq |I|.$$
\end{theorem}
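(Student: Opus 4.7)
The necessity direction is immediate: if $w_i \in W_i$ form a linearly independent system, then for any $I \subseteq [r]$ the vectors $\{w_i\}_{i \in I}$ lie in $\sum_{i \in I} W_i$ and span a subspace of dimension $|I|$, forcing $\dim \sum_{i \in I} W_i \geq |I|$. So the work is in sufficiency, and the plan is to induct on $r$, mimicking a standard proof of Hall's theorem. The base case $r = 1$ is trivial: the condition forces $\dim W_1 \geq 1$, so any nonzero element of $W_1$ works.

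For the inductive step, I would split into two cases according to whether the hypothesis is tight on some proper subset. \textbf{Case A:} there exists a nonempty $I \subsetneq [r]$ with $\dim \sum_{i \in I} W_i = |I|$. Let $U = \sum_{i \in I} W_i$. By the induction hypothesis applied to the family $\{W_i\}_{i \in I}$, we obtain a linearly independent system of representatives $\{w_i\}_{i \in I}$; these automatically form a basis of $U$. We now pass to the quotient $V/U$ and consider the images $\overline{W_j}$ of $W_j$ for $j \in [r] \setminus I$. For any $J \subseteq [r] \setminus I$,
\[
\dim \sum_{j \in J} \overline{W_j} \;=\; \dim \frac{U + \sum_{j \in J} W_j}{U} \;=\; \dim \sum_{i \in I \cup J} W_i - |I| \;\geq\; |I \cup J| - |I| \;=\; |J|.
\]
So the inductive hypothesis furnishes linearly independent representatives $\overline{w_j} \in \overline{W_j}$; lifting to any $w_j \in W_j$ and combining with $\{w_i\}_{i \in I}$ yields the desired system.

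\textbf{Case B:} for every nonempty proper $I \subsetneq [r]$ we have the strict inequality $\dim \sum_{i \in I} W_i \geq |I| + 1$. Pick any nonzero $w_1 \in W_1$ (nonzero because $\dim W_1 \geq 1$ by the case $I = \{1\}$ when $r \geq 2$), let $L = \langle w_1 \rangle$, and project to $V/L$. For nonempty $J \subseteq \{2, \dots, r\}$, projecting drops dimension by at most $1$, so
\[
\dim \sum_{j \in J} \overline{W_j} \;\geq\; \dim \sum_{j \in J} W_j - 1 \;\geq\; |J|,
\]
using the strict bound from Case B (which applies since $J \subsetneq [r]$). The induction hypothesis produces linearly independent $\overline{w_2}, \dots, \overline{w_r}$ in $V/L$; lifting to $w_j \in W_j$ makes $w_1, w_2, \dots, w_r$ linearly independent in $V$.

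The main thing to verify carefully, and the step most likely to conceal a snag, is the dimension computation showing that the hypothesis descends to the quotient in each case — in particular, that the tight-subset Case A produces representatives that form a full basis of $U$ (so that modding out by $U$ is legitimate), and that in Case B the strict inequality survives the passage to $V/L$ even for the largest admissible set $J = \{2, \dots, r\}$. Both computations are essentially bookkeeping once the dichotomy is set up correctly, which is why the dichotomy itself (tight versus strictly slack) is the conceptual heart of the argument.
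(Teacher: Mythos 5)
Your proof is correct. Note, though, that the paper itself does not give a proof of Rado's theorem: it merely states the result, observes that necessity is clear, and cites Rado's original paper [R] and Moshonkin [M]. So there is no in-paper argument to compare against; yours is a self-contained alternative.

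What you have written is the Halmos--Vaughan-style inductive proof of Hall's theorem, transported cleanly to the linear setting: the dichotomy ``some nonempty proper subset is tight'' versus ``every nonempty proper subset has at least one dimension of slack'' replaces the critical/non-critical split, taking quotients replaces deleting matched vertices, and dimension bookkeeping replaces cardinality bookkeeping. The two places you flagged as needing care do indeed check out. In Case A the $|I|$ independent representatives $\{w_i\}_{i\in I}$ necessarily form a basis of the $|I|$-dimensional space $U = \sum_{i\in I} W_i$, so after lifting the quotient representatives $\overline{w_j}$ to elements $w_j \in W_j$ the combined family is independent (project any vanishing linear combination to $V/U$ to kill the $w_j$'s, then use that the $w_i$'s are a basis of $U$). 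In Case B, for any subspace $S$ one has $\dim\bigl((S+L)/L\bigr) \geq \dim S - 1$ when $\dim L = 1$, and the Case-B hypothesis gives $\dim\sum_{j\in J}W_j \geq |J|+1$ for every nonempty $J \subseteq \{2,\dots,r\}$ because such $J$ is a proper subset of $[r]$; so the Hall-type condition survives passage to $V/L$ and the lifted system together with $w_1$ is independent. One could alternatively deduce Rado's theorem from the linear K\"onig statement (Proposition~\ref{linKonig}) or from the general matroid form of Rado's theorem, but your direct induction is the most elementary route and fills a genuine gap, since the paper leaves the proof to the references.
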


As in Hall's marriage theorem, the condition is clearly necessary, and it turns out to be sufficient. 

One can imagine a bipartite graph with the set $[r]$ on the left, and the vector space $V$ on the right. We connect a vertex $i$ on the left with the set of vectors in $W_i$. Instead of considering the cardinality of a set on the right hand side, we are considering the dimension of a vector space.

Rado theorem has a slight generalization, which is a linear version of Konig's theorem. It will be the key input to our proof:

\begin{prop}[\cite{LO}]\label{linKonig}
If $V_1,\ldots,V_r$ are subspaces of an ambient space $V$, the maximum number of linearly independent representatives from different $V_i$'s is
  \begin{equation}\label{eq:vsKonig}
      \min_{I \subseteq [r]} \left[ \dim \left( \sum_{i\in I} V_i \right) + r - |I| \right].
  \end{equation}
\end{prop}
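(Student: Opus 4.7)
The plan is to establish the $\leq$ direction by a one-line counting argument and then produce a family of $k := \min_{I}\bigl[\dim(\sum_{i \in I} V_i) + r - |I|\bigr]$ linearly independent representatives. For the easy direction, given $m$ linearly independent representatives $v_{i_j} \in V_{i_j}$ with distinct indices, for any $I \subseteq [r]$ those with $i_j \in I$ are linearly independent vectors in $\sum_{i \in I} V_i$ and hence number at most $\dim \sum_{i \in I} V_i$, while the rest number at most $r - |I|$; minimizing over $I$ yields $m \le k$.

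For the hard direction, I would aim to find a subset $S \subseteq [r]$ with $|S| = k$ such that $\{V_i\}_{i \in S}$ satisfies the hypothesis of Rado's theorem, and then invoke Rado. Setting the \emph{deficiency} $\mathrm{def}(I) := |I| - \dim \sum_{i \in I} V_i$, the target minimum equals $r - d$ where $d = \max_I \mathrm{def}(I)$, so what I really need is to delete $d$ indices from $[r]$ so that the maximum deficiency of the surviving family drops all the way to $0$. I would do this one index at a time, arranging that each removal decreases the maximum deficiency by exactly one (and leaves $k$ unchanged, since both $r$ and $d$ drop by $1$).

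The crux of the argument is that such an index always exists when $d > 0$: specifically, some $i_0$ lies in \emph{every} maximally deficient $I$. This rests on the supermodularity of $\mathrm{def}$, which follows directly from submodularity of $I \mapsto \dim \sum_{i \in I} V_i$ (itself a standard consequence of $\dim(A+B) + \dim(A \cap B) = \dim A + \dim B$ together with $\sum_{i \in I \cap J} V_i \subseteq (\sum_I V_i) \cap (\sum_J V_i)$). Supermodularity forces the family of maximally deficient sets to be closed under intersection, and since $\mathrm{def}(\emptyset) = 0 < d$ any two such sets must intersect nontrivially; iterating yields a common element $i_0$. After deleting $i_0$, any old maximally deficient set $J$ still satisfies $\mathrm{def}(J \setminus \{i_0\}) \ge d - 1$ while no subset of $[r] \setminus \{i_0\}$ has deficiency $d$, so the new maximum deficiency is exactly $d-1$. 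Induction on $d$ reduces to the base case $d=0$, which is precisely Rado's theorem.

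The step I expect to be the main obstacle is verifying that the maximally deficient sets have a common element; all the supermodularity manipulations and the boundary check $\mathrm{def}(\emptyset) = 0 < d$ live in that one spot. Everything else --- the upper bound, the bookkeeping showing $d$ and $r$ decrease in lockstep, and the final appeal to Rado --- is routine.
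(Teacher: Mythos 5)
The paper itself does not prove Proposition~\ref{linKonig}; it is cited to \cite{LO}, so there is no in-text argument to compare against. Evaluated on its own merits, your proof is correct. The upper bound is the routine count you give. For the lower bound, your reduction works: defining $\mathrm{def}(I) = |I| - \dim\sum_{i\in I} V_i$, submodularity of $I \mapsto \dim\sum_{i\in I}V_i$ (from $\dim(A+B)+\dim(A\cap B)=\dim A+\dim B$ together with $\sum_{I\cap J}V_i \subseteq (\sum_I V_i)\cap(\sum_J V_i)$) gives supermodularity of $\mathrm{def}$, so the family of maximally deficient sets is closed under intersection; their common intersection is nonempty because $\mathrm{def}(\emptyset)=0<d$, yielding an index $i_0$ in every such set. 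Deleting $i_0$ drops the maximum deficiency to exactly $d-1$ (your two bounds are both right), the quantity $r-d=k$ is preserved, and after $d$ steps you are in the Rado case and obtain $k$ linearly independent representatives. This is the standard ``defect version from exact version'' reduction transplanted from Hall/K\"onig to the linear-matroid setting, and it is sound; it is very likely the same argument as in \cite{LO}, since that is the canonical route from Rado to this statement.
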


Suppose that $I \subset [r]$ is a subset attaining the minimum in the theorem. Then any system of linearly independent representatives must use a basis of $\sum_{i\in I} V_i$ as well as one vector from each of the spaces $V_j$ for $j \in [r] \ I$.

\section{Main Theorem}

Let us now describe our main result. Let $\cO$ be a discrete valued ring and $\cK$ its field of fractions. Although all the arguments in this section work in this generality, for the purposes of the later application to higher laminations, we will take $\cO=\F[[t]]$ and $\cK=\F((t))$ where $\F=\R$ or $\C$.

We are interested in rank $n$, $\cO$-submodules of $\cK^n$. Such full-rank $\cO$-submodules are called {\it lattices}.

Let $L_1, L_2, \dots, L_n$ be lattices in $\cK^n$. We wish to find the maximum value of

$$-\val(\det(v_1, \dots, v_n)),$$
under the condition that $v_i \in L_i$. Equivalently, we would like to find the minimum value of $\val(\det(v_1, \dots, v_n)).$ Let us then define 
$$A(L_1, L_2, \dots, L_n) = \min \{\val(\det(v_1, \dots, v_n)) | v_i \in L_i\}.$$
It is not difficult to see that the minimum value is attained for generic choices of $v_1, \dots, v_n$ using the upper semi-continuity of the valuation function.

Given any vector $w \in \cK^n$, let us define 
$$c(w,L_i) := \min\{\lambda \in \Z | t^{\lambda}w \in L_i \}.$$

It will be convenient for us to define
$$c(L,L_i) = \min \{ c(w,L_i) | w \textrm{ is a generator of } L \}.$$
Here, $w \in L$ is a generator of $L$ if and only if $w$ belongs to some set of vectors in $\cK^n$ which form a basis for $L$ as an $\cO$-module. We will say that $w$ is a \emph{tight generator} for $L$ with respect to $L_i$ if $w$ is a generator for $L$ and $c(w,L_i)=c(L,L_i)$. Let us remark that it is immediate from this definition that if $v_i \in L_i$, then $t^{-c(L,L_i)}v_i \in L$.

We claim that it is easy to see that for any lattice $L$, we have
$$\val(\det(L)) + \sum_{i=1}^n c(L,L_i) \leq A(L_1, L_2, \dots, L_n).$$
Let us briefly explain why. If
$$A(L_1, L_2, \dots, L_n) = \val(\det(v_1, \dots, v_n))$$
where $v_i \in L_i$, then $t^{-c(L,L_i)}v_i \in L$, so that $t^{-c(L,L_i)}v_i$ generate some sublattice of $L$, and so
$$\val(\det(L)) \leq \val(\det(t^{-c(L,L_1)}v_1, \dots, t^{-c(L,L_n)}v_n)) = A(L_1, L_2, \dots, L_n) - \sum_{i=1}^n c(L,L_i).$$

We have the following theorem

\begin{theorem} \label{main} There exists a lattice $L$ for which we have the equality
$$\val(\det(L)) + \sum_{i=1}^n c(L,L_i) = A(L_1, L_2, \dots, L_n).$$
In particular, this means: 
\begin{enumerate}
\item There exist $w_i \in L$ such that if we define $v_i : = t^{c(L,L_i)} w_i$ then $v_i \in L_i$. In other words, the $w_i$ are tight generators for $L$ with respect to $L_i$.
\item The $w_i$ generate $L$;
\end{enumerate}
\end{theorem}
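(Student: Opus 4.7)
The strategy is to take $L$ to be a maximizer of
$$\phi(L) := \val(\det L) + \sum_{i=1}^n c(L, L_i)$$
and to show $\phi(L) = A$. The inequality $\phi(L) \leq A$ is already established; since $\phi$ is $\Z$-valued, invariant under rescaling $L$ by a power of $t$, and bounded above, a maximizer $L^*$ exists. Writing $c_i := c(L^*, L_i)$, it suffices to produce a basis $w_1, \dots, w_n$ of $L^*$ in which each $w_i$ is a tight generator for $L_i$: then $v_i := t^{c_i} w_i \in L_i$, the $v_i$ are linearly independent, and $\val(\det(v_1, \dots, v_n)) = \val(\det L^*) + \sum c_i = \phi(L^*)$, so $A \leq \phi(L^*)$; combined with $\phi(L^*) \leq A$ this yields the theorem together with conclusions (1) and (2).

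I pass to the residue space $L^*/tL^*$, an $n$-dimensional $\F$-vector space, and for each $i$ let $V_i \subseteq L^*/tL^*$ be the $\F$-span of the images of the tight generators of $L^*$ with respect to $L_i$. A short check shows $V_i$ equals the image set together with $0$: if $w, w'$ are tight lifts then $c(w+w', L_i) \leq c_i$, and whenever $w+w'$ is still a generator of $L^*$ (i.e., $\bar w + \bar w' \neq 0$), the reverse inequality is forced by minimality of $c_i = c(L^*, L_i)$. The desired basis exists precisely when one can choose linearly independent representatives $\bar w_i \in V_i$ (lift each to a tight $w_i$ and apply Nakayama), which by Proposition \ref{linKonig} is equivalent to $\dim \sum_{i \in I} V_i \geq |I|$ for every subset $I \subseteq [n]$.

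Suppose, toward contradiction, that some $I$ violates this, and set $U := \sum_{i \in I} V_i$ with $\dim U < |I|$. Let $L'$ be the preimage of $U$ under $L^* \twoheadrightarrow L^*/tL^*$, so $tL^* \subseteq L' \subseteq L^*$ and $\val(\det L') = \val(\det L^*) + (n - \dim U)$. The main technical step---and the place I expect to be the main obstacle---is to show $c(L', L_j) = c_j$ for $j \in I$ and $c(L', L_j) \geq c_j - 1$ for $j \notin I$. This requires classifying the generators of $L'$ by writing each one as $w' = t^k w^*$ for some primitive $w^* \in L^*$; analyzing the short exact sequence $0 \to tL^*/tL' \to L'/tL' \to L'/tL^* \to 0$ reveals that the only possibilities are $k = 0$ with $\bar{w^*} \in U$ or $k = 1$ with $\bar{w^*} \notin U$, and the scaling by $t$ in the $k = 1$ case can decrease $c$ by at most $1$. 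Putting these estimates together,
$$\phi(L') \geq \phi(L^*) + (n - \dim U) - (n - |I|) = \phi(L^*) + |I| - \dim U \geq \phi(L^*) + 1,$$
contradicting the maximality of $L^*$. This construction is directly parallel to the potential-update step in the Kuhn--Munkres algorithm, with $L$ playing the role of the potential and $L \mapsto L'$ the analogue of decreasing one side of the potential by $1$ along a Hall-violating set.
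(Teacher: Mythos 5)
Your proof is correct and takes essentially the same route as the paper: your modified lattice $L'$, the preimage of $U$ under $L^* \twoheadrightarrow L^*/tL^*$, is exactly $t$ times the paper's $L' = t^{-1}W + L$, and the quantity you call $\phi$ is invariant under rescaling by a power of $t$, so the two update steps coincide. The cosmetic differences are that you argue by contradiction at a maximizer (which exists since $\phi$ is $\Z$-valued and bounded above by $A$) rather than running the paper's iterative algorithm, and that your explicit classification of the generators of $L'$ --- either $w^*$ with $\bar{w^*} \in U \setminus \{0\}$, or $tw^*$ with $\bar{w^*} \notin U$ --- supplies a verification of the estimates on $c(L', L_j)$ that the paper's proof only asserts.
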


As we shall see, this theorem is a linearized version of the Kuhn-Munkres theorem, and in order to prove it, we generalize the Kuhn-Munkres algorithm to give an algorithm for finding such an $L$.

For any choice of $w_i \in L_i$, we have that 

$$A(L_1, L_2, \dots, L_n) \leq \val(\det(w_1, \dots, w_n)).$$

Thus for any choice of $w_i \in L_i$ and any choice of a lattice $L$, we have that

$$\val(\det(L)) + \sum_{i=1}^n c(L,L_i) \leq \val(\det(w_1, \dots, w_n)).$$

The above theorem says that the minimal value of the right hand side is the maximal value of the left hand side. We may therefore interpret the theorem as saying that we have found a lattice $L$ which is a witness to the minimal value of $\val(\det(w_1, \dots, w_n)).$

\begin{rmk} Let $e_1, \dots e_n$ be a basis of $\cK^n$. Then we can recover the usual Kuhn-Munkres theorem by considering lattices of the form
$$L_i = <t^{c_{i1}}e_1, t^{c_{i2}}e_2, \dots, t^{c_{in}}e_n>.$$
For more details, see \cite{LO}.
\end{rmk}

First let us explain some heuristics which may help give a sense of the difficulties in finding $L$. For any lattice $L$, there are always tight generators for $L$ with respect to any other lattice $M$. The difficulty is finding tight generators $w_i$ for $L$ with respect to each of $L_1, \dots, L_n$ such that the $w_i$ will generate $L$. Typically, we can find tight generators $w_i$ which may be linearly independent in $\cK^n$, but they will not necessarily generate all of $L$. (And if we take the lattice $L'$ spanned by the $w_i$, the $w_i$ may not be tight generators for $L'$.)

\begin{proof} We will start with an arbitrary lattice $L$. At each step we will modify the lattice $L$ until we find one which satisfies the equality. 

First note that $n$ vectors $w_1, \dots, w_n$ will generate $L$ if and only if their images $\tilde{w_1}, \dots, \tilde{w_n}$ in $L/tL$ form a basis. 

For each $L_i$, let $W_i \subset L$ be the set of tight generators for $L$ with respect to $L_i$. Then let $\tilde{W}_i \subset L/tL$ be the projection of this set to $L/tL$. It is easy to verify that $\tilde{W}_i$ is a vector subspace of $L/tL$. We wish to choose one vector $\tilde{w_i}$ from each $\tilde{W}_i$ such that the $\tilde{w_1}, \dots, \tilde{w_n}$ form a basis of $L/tL$. We will use the terminology that for the subspaces $\tilde{W_i}$, we wish to find a \emph{system of linearly independent representatives}.

Let us explain the basic idea of the algorithm. Recall that for any $L$, we have
$$\val(\det(L)) + \sum_{i=1}^n c(L,L_i) \leq A(L_1, L_2, \dots, L_n).$$
At each stage, we will either make the left hand side bigger by some integer value, or we will increase the size of the system of linearly independent representatives. The only way this terminates is that we have a system of $n$ linearly independent representatives, which is equivalent to the equality condition for the above inequality.

Now let us describe the algorithm for finding $L$. Start with an arbitrary lattice $L$.

We have the subspace $\tilde{W}_i \subset L/tL$ for each $i \leq n$. Let us find a maximum number of linearly independent representatives $\tilde{w_i}$ for $i \in J$, where $J \subset [n]$. Then we have that by Theorem ~\ref{linKonig}, there exists a set $I \subset [n]$ such that 
$$|J| = \dim \left( \sum_{i\in I} \tilde{W}_i \right) + n - |I|.$$
Moreover, we have that among the $\tilde{w_i}$, some form a basis for $\tilde{W} := \sum_{i \in I} \tilde{W}_i$, while the rest have indices in $[n] \setminus I$. Thus we may write $J = J_1 \coprod J_2$, where $\tilde{w_i}$ for $i \in J_1$ give a basis of $\tilde{W}$, and $J_2 \subset [n] \setminus I.$

Choose any lift of $\tilde{W}$ to an $\cO$-submodule of $L$. Call this lift $W \subset L$. Now let $L' = t^{-1}W + L$.

\begin{claim} $$\val(\det(L)) + \sum_{i=1}^n c(L,L_i) \leq \val(\det(L')) + \sum_{i=1}^n c(L',L_i) .$$
\end{claim}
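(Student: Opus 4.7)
The plan is to reduce the claim to a comparison of $\val(\det)$ and of $\sum c(\cdot, L_i)$ separately. First I would pick a basis $e_1, \dots, e_d$ of $W$ (with $d = \dim \tilde W$) and extend it to a basis $e_1, \dots, e_n$ of $L$ such that $W \cap tL = tW$ (a lift of $\tilde W \subset L/tL$ has such a basis). Since $L' = t^{-1}W + L$ then has basis $t^{-1}e_1, \dots, t^{-1}e_d, e_{d+1}, \dots, e_n$, a direct computation gives $\val(\det(L')) = \val(\det(L)) - d$, and the claim reduces to
$$\sum_{i=1}^n c(L', L_i) - \sum_{i=1}^n c(L, L_i) \geq d.$$

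The central step is to prove the per-index bounds: $c(L', L_i) \geq c(L, L_i)$ for all $i$, and $c(L', L_i) \geq c(L, L_i) + 1$ for $i \in I$. I would proceed by contradiction. Set $c_i := c(L, L_i)$ and let $\varepsilon = 1$ if $i \in I$ and $\varepsilon = 0$ otherwise. Suppose $w' \in L' \setminus tL'$ satisfies $t^{c_i + \varepsilon - 1} w' \in L_i$, and decompose $w' = t^{-1}u + w''$ with $u \in W$ and $w'' \in L$. Then
$$t^{c_i + \varepsilon - 1} w' = t^{c_i + \varepsilon - 2}(u + tw'') \in L_i,$$
so $v := u + tw'' \in L$ satisfies $c(v, L_i) \leq c_i + \varepsilon - 2 \leq c_i - 1 < c_i$. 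By minimality of $c_i$ among generators of $L$, $v$ is not a generator, i.e.\ $v \in tL$; hence $\bar u = \bar v = 0$ in $L/tL$, and $W \cap tL = tW$ forces $u \in tW$. Consequently $w' \in W + L = L$ and $c(w', L_i) \leq c_i + \varepsilon - 1$.

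Now split on $\varepsilon$. For $\varepsilon = 0$ we have the strict inequality $c(w', L_i) < c_i$, so minimality again yields $w' \in tL \subseteq tL'$. For $\varepsilon = 1$, either $w' \in tL \subseteq tL'$, or $w'$ is a tight generator of $L$ with respect to $L_i$, in which case $\overline{w'} \in \tilde W_i \subseteq \tilde W$, so $w' \in W + tL = tL'$. Either way, $w' \in tL'$ contradicts $w' \notin tL'$, establishing the bounds. Summing them yields $\sum_i (c(L', L_i) - c(L, L_i)) \geq |I|$, and since $|J| \leq n$ trivially, the identity $|J| = d + n - |I|$ from Proposition \ref{linKonig} gives $d \leq |I|$, closing the argument. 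I expect the main obstacle to be the case split on $\varepsilon$: the $i \in I$ case is the one that genuinely uses the structural input $\tilde W_i \subseteq \tilde W$ to extract the extra $+1$, whereas for $i \notin I$ the weaker bound follows automatically from the strict inequality $c(v, L_i) < c_i$.
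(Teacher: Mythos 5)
Your proof is correct and follows the same strategy as the paper: separate the $\val(\det)$ change from the change in $\sum_i c(\cdot,L_i)$, and close with $\dim W\leq |I|$ via the Konig identity. The paper simply asserts the per-index relations $c(L',L_i)=c(L,L_i)+1$ for $i\in I$ and $c(L',L_i)=c(L,L_i)$ otherwise without proof, whereas you supply the (one-sided) argument actually needed; note also that your decomposition $w'=t^{-1}u+w''$ together with $W\cap tL=tW$ can be bypassed, since $v=tw'\in tL$ immediately gives $w'\in L$ by injectivity of multiplication by $t$.
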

\begin{proof} first note that 
$$\val(\det(L')) = \val(\det(L)) - \dim W.$$
Also note that 
$$c(L',L_i) = c(L,L_i) + 1 \textrm{ for } i \in I$$
$$c(L',L_i) = c(L,L_i) \textrm{ for } i \in [n] \setminus I.$$
Note also that $\dim W \leq |I|$ with equality only when we have a complete system of linearly independent representatives. This yields the claim.
\end{proof}

Thus, at each stage, if we have not found a complete system of linearly independent representatives, we may modify $L$ while making the inequality closer. We may then find a maximum set of linearly independent representatives and iterate.
\end{proof}

\section{Buildings and Laminations}

We can now apply our main theorem, Theorem~\ref{main}, to the study of intersection pairings between higher laminations. We start by introducing the objects used to define higher laminations, the affine Grassmannian and the affine building. We will then describe some invariants $f_{ijk}^t$ of configurations in the affine Grassmannian and the affine building that are tropicalizations of invariants of configurations of flags. Our main theorem, once translated to this context, will give an interpretation of the functions $f_{ijk}^t$ in terms of the metric geometry of the building.

\subsection{Affine Grassmannian and affine buildings}

Let $G$ be one of the groups $GL_n$, $PGL_n$ or $SL_n$. Write $\vG$ for its Langlands dual group, $GL_n$, $SL_n$ or $PGL_n$, respectively. Let $\F$ be a field, which for our purposes will always be $\R$ or $\C$. Let $\cO = \F[[t]]$ be the ring of formal power series over $\F$. Note that $\cO$ is naturally a valuation ring.

The \emph{affine Grassmannian} for $G$ is an (ind-)scheme whose $F$-points are the set
$$\Gr(\F) = \Gr(G) = G(\cK)/G(\cO).$$

Here is a concrete description of this set when $G=GL_n$, $PGL_n$ and $SL_n$. For $G=GL_n$, a point in the affine Grassmannian is given by a \emph{lattice} in $\cK_n$ (a finitely generated, rank $n$, $\cO$-submodule of $\cK^n$). For $G=SL_n$, a point in the affine Grassmannian corresponds to a lattice which the property that this lattice has generators $v_1, \dots, v_n$ such that
$$v_1 \wedge \dots \wedge v_n=e_1 \wedge \dots \wedge e_n.$$
Here $e_1, \dots, e_n$ is the standard basis of $\cK^n$. For $G=PGL_n$, a point in the affine Grassmannian corresponds to an equivalence class of lattices up to scale: two lattices $L$ and $L'$ are equivalent if $L=cL'$ for some $k \in \cK$. In all three cases, the affine Grassmannian consists of some set of lattices. Moreover, in each case, $G(\cK)$ acts on the set of such lattices, and the stabilizer of any lattice is isomorphic to $G(\cO)$.

The affine Grassmannian has a metric naturally taking values in the dominant coweights of $G$. Recall that the coweight lattice $\Lambda$ is defined as $\mathrm{Hom}(\mathbf{G}_m,T)$. The dominant coweights are those coweights lying in the dominant cone. For example, for $G=GL_n$, the set of dominant coweights is exactly the set of $$\mu=(\mu_1, \dots, \mu_n) \in \Z^n$$
where $\mu_1 \geq \mu_2 \geq \cdots \geq \mu_n$. For $G=SL_n$ and $PGL_n$, the coweights are given by similar conditions. For $SL_n$, the dominant coweights further satsify the relation $\mu_1+\cdots+\mu_n=0$. For $PGL_n$, the dominant coweights lie in the quotient $\Z^n/(1, 1, \dots, 1)$.

Pairs of elements of $\Gr$ up to the action of $G(\cK)$ are in bijection with double cosets 
$$G(\cO) \backslash G(\cK) / G(\cO).$$ These cosets are parameterized by the set $\Lambda_+$ of dominant coweights of $G$. Let us explain further.

Fix a basis $e_1, \dots, e_n$ of $\cK^n$. For any dominant coweight $\mu=(\mu_1, \dots, \mu_m)$, we can consider the element of $\in G(\cK)$ which is given by the matrix with diagonal entries $t^{-\mu_i}$. Applying this element to the trivial lattice $<e_1, \dots, e_n>$ gives us a lattice which we will call $t^\mu$. Any two points $p$ and $q$ of the affine Grassmannian can be translated by some element of $G(\cK)$ to $t^0$ and $t^\mu$, respectively. It turns out that $\mu$ will be unique. This gives the identification of the double coset space with $\Lambda_+$.

For $p$ and $q$ as above, we will write 
$$d(p,q) = \mu$$
and say that the distance from $p$ to $q$ is $\mu$. Note that this is a non-symmetric distance function. We have the relation $$d(q,p)=-w_0 d(p,q)$$ where $w_0$ is the longest element of the Weyl group of $G$. There is a partial order on $\Lambda^+$ defined by $\lambda > \mu$ if $\lambda - \mu$ is a positive linear combination of positive co-roots). Under this partial ordering, the distance function satisfies a version of the triangle inequality. If we take this metric on the affine Grassmannion, then $G(\cK)$ acts by isometries.

We can now introduce the affine building for $G=PGL_n$, the case which is of the most interest to us. The affine building is a simplicial complex which captures the geometry of above metric on the affine Grassmannian.

The set of vertices of the affine building for $PGL_n$ are in bijection with the the points of the affine Grassmannian $\Gr(PGL_n)$. The simplices of the affine building are as follows: for any lattices $L_0, L_1, \dots, L_k$, there is a $k$-simplex with vertices at $L_0, L_1, \dots, L_k$ if and only if $$L_0 \subset L_0 \subset \cdots \subset L_k \subset t^{-1}L_0.$$
(Recall that $\Gr(PGL_n)$ consists of lattices up to scale. For this reason, we will need to choose appropriate representatives $L_0, \dots, L_k$. Note that it may be necessary to scale some of the $L_i$ in order to fulfill this condition.) The non-symmetric, coweight-valued metric we defined above descends to a metric on the affine building.

\subsection{Invariants of Flags}

We now define some functions of triples of principal affine flags as well as their tropical analogues, which are functions of triples of points in the affine Grassmannian.

We start by defining \emph{principal affine flags} for $G=SL_n$. The space of principal affine flags is parameterized by the quotient $G/U$, where $U \subset G$ is the subgroup of unipotent upper triangular matrices.  Concretely, we can specify a principal affine flag by giving an ordered basis $v_1, \dots, v_n$. These $n$ vectors determine a flag, where the $k$-dimensional subspace of this flag is spanned by $v_1, \dots , v_k$ for $k \leq n$. Additionally, the data of a principal affine flag includes the volume forms $$ v_1 \wedge \dots \wedge v_k$$ on these subspaces. Finally, we require that $$v_1 \wedge \dots \wedge v_n$$ is the standard volume form (without this requirement, we would be dealing with $GL_n$ flags). Two sets of basis vectors will determine the same principal affine flag if they give the same $k$-forms $v_1 \wedge \dots \wedge v_k$ for $k \leq n$. We will sometimes abbreviate ``principal affine flags'' by calling them \emph{principal flags}.

We would like to consider the space of three flags up to the left diagonal action of $G$:
$$G \backslash (G/U)^3$$
Let $F_1, F_2, F_3$ be three flags which are represented by bases $u_1, \dots, u_n$, $v_1, \dots, v_n$ and $w_1, \dots, w_n$ respectively. For non-negative integers $i, j, k$ such that $i+j+k=n$ we can consider the function 
$$f_{ijk}(F_1, F_2, F_3)=\det(u_1, u_2, \dots, u_i, v_1, v_2, \dots v_j, w_1, w_2, \dots, w_k).$$ 
This function is manifestly $G$-invariant. Note that if one of $i, j, k$ is $0$, these functions only depend on two of the flags. We call such functions {\em edge} functions, while the functions depending on three flags will be called {\em face} functions. For any triangulation of an $m$-gon, the edge and face functions form a cluster for the space of configurations of $m$ principal flags, which is $\A_{G,S}$ where $S$ is a disc with $m$ marked points.

The functions $f_{ijk}$ have tropical analogues, $f_{ijk}^t$, which will be functions of three points in the affine Grassmannian for $SL_n$. The functions $f_{ijk}^t$ appeared in \cite{K}, where they were called $H_{ijk}$.

Let $x_1, x_2, x_3$ be three points in the affine Grassmannian for $SL_n$, thought of as lattices. For $i, j, k$ as above with $i+j+k=n$, we consider

$$-\val(\det(u_1, \dots, u_i, v_1, \dots v_j, w_1, \dots, w_k))$$
as $u_1, \dots, u_i$ range over elements of the $\cO$-submodule $x_1$, $v_1, \dots v_j$ range over elements of $x_2$, and $w_1, \dots, w_k$ range over elements of $x_3$. Define $f_{ijk}^t (x_1,x_2,x_3)$ as the maximum value attained by this quantity.

\begin{rmk} The edge functions recover the distance between two points in the affine Grassmannian. More precisely, for $i+j=n$, we have 
$$f_{ij0}^t (x_1,x_2,x_3) = \omega_j \cdot d(x_1,x_2) = \omega_i \cdot d(x_2,x_1).$$ Here $\omega_i$ and $\omega_j$ are fundamental weights for $SL_n$.
\end{rmk}

We can extend $f_{ijk}^t$ to a function on the affine Grassmannian for $PGL_n$ in the following way. Let $x_1, x_2, x_3$ be three points in the affine Grassmannian for $PGL_n$, represented by three lattices $L_1, L_2, L_3$. For $i, j, k$ as above, we can again maximize
$$-\val(\det(u_1, \dots, u_i, v_1, \dots v_j, w_1, \dots, w_k))$$
as before. Call the resulting maximum
$$\tilde{f}_{ijk}^t (L_1,L_2,L_3).$$ Note that $\tilde{f}_{ijk}^t (L_1,L_2,L_3)$ will depend on the representative lattices $L_1, L_2, L_3$ that we chose, which are only determined up to scale. To fix this, we renormalize, defining
\begin{equation} \label{PGLfunction}
f_{ijk}^t (x_1,x_2,x_3) := \tilde{f}_{ijk}^t (L_1,L_2,L_3) + \frac{\val(\det(L_1)\det(L_2)\det(L_3))}{n}.
\end{equation}
Note that if $L_1, L_2, L_3$ have determinant $1$ (and hence correspond to points in the affine Grassmannian for $SL_n$) our definition reduces to the previous definition.

\subsection{Metric interpretation of the functions $f^t_{ijk}$}

We now give another way to compute the tropical functions $f_{ijk}^t$. Whereas $f_{ijk}^t$ was defined in a valuation-theoretic way, it turns out that it is determined by the coweight-valued metric on the affine Grassmannian (and affine building).

We need some notation first. Let $\omega_i$ be the $i$-th fundamental weight for $SL_n$: $\omega_i = (1, \dots, 1, 0, \dots, 0)$ where there are $i$ $1$'s and $n-i$ $0$'s. Recall that for any two points $p, q$ in the affine Grassmannian, $d(p,q)$ is an element of the coweight lattice for $SL_n$.

Define, for $1 \leq i \leq n-1$,
$$d_i(L,M) = \omega_i \cdot d(L,M)$$
Note that if $L, M$ are both in the affine Grassmannian for $SL_n$, then $d_i(L,M)$ is an integer for all $i$, while if they are in the affine Grassmannian for $PGL_n$, $d_i(L,M)$ may have denominator $n$.

We will extend the definition of $d_i(L,M)$ to the case when $L, M$ are both in the affine Grassmannian for $PGL_n$. We will need to view the coweight lattice of $PGL_n$ as containing the coweight lattice of $SL_n$ as an index $n$ sublattice. Recall that the coweight lattice for $SL_n$ is the subset of $\Z^n$ given by 
$$\{(x_1, \dots, x_n) |  x_1 + \cdots + x_n=0\}.$$
The coweight lattice for $PGL_n$ is given by
$$\Z^n/(1, 1, \dots, 1).$$
Any point in $\Z^n$ can be translated into the plane $x_1 + \cdots + x_n=0$ using some multiple of $(1, 1, \dots, 1)$, at the cost of possibly introducing entries in $\frac{1}{n} \Z.$ One can then easily see that the weight latice for $PGL_n$ naturally contains as an index $n$ sublattice the coweight lattice of $SL_n$.

Therefore if $p$ and $x$ are in the affine Grassmannian for $PGL_n$, viewing the coweights of $PGL_n$ in the same space as the coweights of $SL_n$, we can define pairings with $SL_n$ weights. Thus we may consider the quantities 
$$d_i(p,x) := \omega_i \cdot d(p,x).$$
These quantities lie in $\frac{1}{n} \Z$. 

\begin{theorem} \label{main} Let $x_1, x_2, x_3$ be any configuration of points in the affine Grassmannian for $PGL_n$. Then 
$$f_{ijk}^t (x_1,x_2,x_3) = \min_{p} d_i(p,x_1) + d_j(p,x_2) + d_k(p,x_3),$$
where the minimum is taken over all $p$ in the affine Grassmannian for $PGL_n$.
\end{theorem}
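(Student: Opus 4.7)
The plan is to reduce the identity to the lattice-level optimization solved in Section 3. Choose lattice representatives $L_1, L_2, L_3$ of $x_1, x_2, x_3$ and apply the main theorem of Section 3 to the $n$-tuple
$$(\underbrace{L_1,\dots,L_1}_i,\underbrace{L_2,\dots,L_2}_j,\underbrace{L_3,\dots,L_3}_k),$$
which is admissible since $i+j+k=n$. By definition the resulting $A$ equals $-\tilde f^t_{ijk}(L_1,L_2,L_3)$, and that theorem produces a ``witness lattice'' $L^*$ representing the eventual optimal $p^*$. I will handle the two inequalities $\leq$ and $\geq$ separately: one by a wedge-product bound valid for arbitrary $L$, the other by extracting the extra structure carried by $L^*$.

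For $f^t_{ijk}(x_1,x_2,x_3) \leq \min_p \sum_a d_{i_a}(p,x_a)$, fix any lattice $L$ representing $p$ and pass to a basis in which $L = \langle e_1,\ldots,e_n\rangle$ and $L_1 = \langle t^{-\mu_1^{(1)}}e_1,\ldots,t^{-\mu_n^{(1)}}e_n\rangle$ with $\mu_1^{(1)} \geq \cdots \geq \mu_n^{(1)}$. A direct expansion shows that any $u_1,\ldots,u_i \in L_1$ satisfy
$$t^{\omega_i \cdot d(L,L_1)}(u_1\wedge\cdots\wedge u_i) \in \wedge^i L,$$
and since this inclusion is basis-independent, the analogous ones hold for the $v$'s and $w$'s with respect to $L_2$ and $L_3$. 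Wedging and using $\wedge^n L = t^{\val(\det L)}\cO\cdot(e_1\wedge\cdots\wedge e_n)$ then yields
$$\val(\det(u_1,\dots,u_i,v_1,\dots,v_j,w_1,\dots,w_k)) \geq \val(\det L) - \omega_i \cdot d(L,L_1) - \omega_j \cdot d(L,L_2) - \omega_k \cdot d(L,L_3).$$
Maximizing the negative over $u,v,w$ and converting the $GL_n$-pairings to the $PGL_n$-normalized $d_{i_a}$ via the sum-zero lift (using $\sum_s \mu_s^{(a)} = \val(\det L) - \val(\det L_a)$) gives $f^t_{ijk} \leq \sum_a d_{i_a}(p,x_a)$.

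For attainment, the witness lattice $L^*$ from Section 3 satisfies $\val(\det L^*) + i\,c(L^*,L_1) + j\,c(L^*,L_2) + k\,c(L^*,L_3) = -\tilde f^t_{ijk}(L_1,L_2,L_3).$ The key extra input from that proof is that this equality is accompanied by a system of linearly independent representatives: one tight generator of $L^*$ per slot of the $n$-tuple, jointly forming a basis of $L^*/tL^*$. Since $L_1$ fills $i$ slots, we get $i$ linearly independent vectors in the single subspace $\tilde W^{(1)} \subset L^*/tL^*$. A short computation in the standard form of $L^*$ and $L_1$ identifies $\dim \tilde W^{(1)}$ with the multiplicity of the top entry $\mu_1^{(1)}$ of $d(L^*,L_1)$, forcing $\mu_1^{(1)} = \cdots = \mu_i^{(1)}$ and hence $-i\,c(L^*,L_1) = i\mu_1^{(1)} = \omega_i \cdot d(L^*,L_1)$. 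The analogous identities for $L_2, L_3$, followed by the same $PGL_n$-normalization, produce $f^t_{ijk}(x_1,x_2,x_3) = \sum_a d_{i_a}(p^*,x_a)$ with $p^* = [L^*]$.

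The main obstacle is the asymmetry between $i_a \mu_1^{(a)}$ (arising from $c(L,L_a)$ in the main theorem of Section 3) and $\omega_{i_a} \cdot d(L,L_a)$ (what $PGL_n$-normalizes to the target $d_{i_a}$). One always has $\omega_{i_a} \cdot d(L,L_a) \leq i_a \mu_1^{(a)}$, with strict inequality whenever the top entry of $d(L,L_a)$ has multiplicity $<i_a$, so the easy direction of the main theorem of Section 3 is on its own too weak. The wedge-product argument supplies the stronger pointwise bound for arbitrary $L$, while the linearity of Proposition~\ref{linKonig} is precisely what forces the top-multiplicity condition $\mu_1^{(a)} = \cdots = \mu_{i_a}^{(a)}$ to hold at the witness $L^*$; only then do the two quantities coincide and the bound become sharp.
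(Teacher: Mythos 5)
Your overall reduction matches the paper's: represent the $x_a$ by lattices, apply the Section~3 theorem to the $n$-tuple with $i$ copies of $L_1$, $j$ of $L_2$, $k$ of $L_3$, extract the witness lattice $L^*$, and use the tight-generator/linear-independence structure to show $d(L^*,L_a)$ has its top $i_a$ entries constant, so that $\omega_{i_a}\cdot d(L^*,L_a) = -i_a\,c(L^*,L_a)$ and the normalization in Equation~\eqref{PGLfunction} converts the Section~3 identity into the desired equality at $p^* = [L^*]$. That is exactly the paper's attainment argument, and your computation of $\dim \tilde W^{(a)}$ as the multiplicity of the top entry of $d(L^*,L_a)$ is correct.

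Where you go beyond the paper is the $\leq$ direction, and here you have genuinely added something the paper elides. The paper's own proof only verifies equality at the witness $L^*$; it does not separately argue that $d_i(p,x_1)+d_j(p,x_2)+d_k(p,x_3) \geq f^t_{ijk}(x_1,x_2,x_3)$ for \emph{every} $p$. You correctly observe that the ``easy'' valuation bound from Section~3, $\val(\det L) + \sum_a i_a\,c(L,L_a) \leq A$, is too weak for this: it only controls $\sum_a i_a\mu_1^{(a)}$, which dominates $\sum_a \omega_{i_a}\cdot d(L,L_a)$ strictly whenever the top entry of $d(L,L_a)$ has multiplicity $<i_a$. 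Your wedge-product refinement, $t^{\omega_{i_a}\cdot d(L,L_a)}(u_1\wedge\cdots\wedge u_{i_a}) \in \wedge^{i_a}L$, is the correct sharpening: multiplying the three inclusions and using $\wedge^n L = t^{\val\det L}\cO\cdot e_1\wedge\cdots\wedge e_n$ gives precisely $\val\det(u,v,w) \geq \val\det L - \sum_a\omega_{i_a}\cdot d(L,L_a)$, which after the $PGL_n$ normalization yields the universal lower bound on the right-hand side. So your proof is correct, follows the paper's route for attainment, and supplies a missing step for the minimization. (One small caution: for the normalization bookkeeping to close, the correction term in Equation~\eqref{PGLfunction} should carry the weights $i,j,k$, i.e.\ it should read $\tfrac{1}{n}(i\,\val\det L_1 + j\,\val\det L_2 + k\,\val\det L_3)$; this is the version that is invariant under rescaling the $L_a$ and that makes both your $\leq$ bound and the paper's final ``putting this together'' step balance. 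You implicitly used this corrected form.)
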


Note that both sides of the expression are in $\frac{1}{n} \Z$. A special case of the theorem is when $x_1, x_2, x_3$ is a configuration of points in the affine Grassmannian for $SL_n$. In this case, both sides of the expression are integers. The integrality on the left comes from a valuation, while the integrality on the right is slightly more subtle. In general, it is easy to check that the expression 
$$d_i(p,x_1) + d_j(p,x_2) + d_k(p,x_3)$$
is determined mod $1$ by $x_1, x_2, x_3$, i.e., it does not depend on $p$ when considered mod $1$. Clearly if $x_1, x_2, x_3, p$ are all in the affine Grassmannian for $SL_n$, the pairing between coweights and weights guarantees that the expression is integral, hence it is integral for any $p$.

Note that even in the case that $x_1, x_2, x_3$ are all in the affine Grassmannian for $SL_n$, the minimizing point $p$ may be in the affine Grassmannian for $PGL_n$.

We now have a description of the functions $f_{ijk}^t$ that is completely of a metric nature. Therefore the functions $f_{ijk}^t$ may also be naturally viewed as functions on configurations of points in the affine building. We observed in \cite{Le} that for \emph{positive} configurations of points in the affine building, the functions $f_{ijk}^t$ only depended on metric properties of the configuration within the building. However, we did not give an explicit formula for this dependence. We now have an explicit formula that holds for all configurations, not just positive ones.

\begin{proof} Let us now show how the theorems above follow from our main theorem. We will give the proof in the case that $G=PGL_n$, which is more general than the case when $G=SL_n$.


We wish to show that
$$f_{ijk}^t (x_1,x_2,x_3) = \min_{p} d_i(p,x_1) + d_j(p,x_2) + d_k(p,x_3).$$
First let $x_1, x_2, x_3$ be represented by lattices $L_1, L_2, L_3.$ Let us take in the main theorem the $n$ lattices
\[
  \underbrace{L_1,\ldots,L_1}_{i}, \underbrace{L_2,\ldots,L_2}_{j}, \underbrace{L_3,\ldots,L_3}_{k}
\]
The theorem gives us that there exists a lattice $L$ such that 
$$\val(\det(L)) + i \cdot c(L,L_1) + j \cdot c(L,L_1) + k \cdot c(L,L_3)= A(L_1, \dots, L_1, L_2, \dots, L_2, L_3, \dots, L_3),$$
where $A(L_1, \dots, L_1, L_2, \dots, L_2, L_3, \dots, L_3)$ is the minimal value of
$$\val(\det(u_1, \dots, u_i, v_1, \dots v_j, w_1, \dots, w_k))$$
as $u_1, \dots, u_i$ range over elements $L_1$, $v_1, \dots v_j$ range over elements of $L_2$, and $w_1, \dots, w_k$ range over elements of $L_3$.

The first thing to note is that
$$A(L_1, \dots, L_1, L_2, \dots, L_2, L_3, \dots, L_3)=-\tilde{f}_{ijk}^t (L_1,L_2,L_3).$$

The theorem tells us that there are vectors
$$u_1, \dots, u_i, v_1, \dots v_j, w_1, \dots, w_k$$
minimizing
$$\val(\det(u_1, \dots, u_i, v_1, \dots v_j, w_1, \dots, w_k))$$
such that the vectors
$$t^{-c(L,L_1)}u_1, \dots,  t^{-c(L,L_1)}u_i$$
$$t^{-c(L,L_2)}v_1, \dots,  t^{-c(L,L_2)}v_j$$
$$t^{-c(L,L_3)}w_1, \dots,  t^{-c(L,L_3)}w_j$$
all lie in $L$, and moreover are a set of generators for $L$. Moreover, these vectors are tight generators for $L$ with respect to $L_1, L_2,$ and $L_3$.

Let us unravel what this tells us. This means, for example, that if we view $d(L,L_1)$ as an element of $\Z^n$ (viewing $L$, and $L_i$ as $GL_n$ lattices temporarily), then $d(L,L_1)$ has as its first $i$ entries $-c(L,L_1)$. A simple calculation then tells us that 
$$d_i(L,L_1) = -i \cdot c(L,L_1)+\frac{i}{n}(\val(\det(L_1))-\val(\det(L))).$$
Similarly,
$$d_j(L,L_2) = -j \cdot c(L,L_2)+\frac{j}{n}(\val(\det(L_2))-\val(\det(L))).$$
$$d_k(L,L_3) = -k \cdot c(L,L_3)+\frac{k}{n}(\val(\det(L_3))-\val(\det(L))).$$

Putting this together with Equation ~\ref{PGLfunction} yields the result.

\end{proof}

It is not hard to see that the theorem has the following mild generalization. Let $i_1+i_2+\cdots+i_k=n$. For points $x_1, x_2, \dots, x_k$ in the affine building for $PGL_n$ or $SL_n$, we can define the functions
$$f_{i_1i_2\dots i_k}^t(x_1,x_2\dots,x_k).$$
Then we have that
\begin{theorem}\label{basicfn}
$$f_{i_1i_2\dots i_k}^t(x_1,x_2\dots,x_k)= \min_{p} d_{i_1}(p,x_1) + d_{i_2}(p,x_2) + \cdots + d_{i_k}(p,x_k)$$
where the minimum is taken over all $p$ in the affine Grassmannian for $PGL_n$.
\end{theorem}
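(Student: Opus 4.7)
The plan is to imitate the proof of the preceding $k=3$ case. Represent $x_1, \ldots, x_k$ by lattices $L_1, \ldots, L_k$ in $\cK^n$ and apply Theorem~\ref{main} to the $n$-tuple of lattices obtained by repeating $L_j$ exactly $i_j$ times. Since $\sum_j i_j = n$, this is a valid input, and Theorem~\ref{main} produces a lattice $L^\star$ satisfying
$$\val(\det(L^\star)) + \sum_{j=1}^{k} i_j\, c(L^\star, L_j) \;=\; A \;=\; -\tilde{f}^t_{i_1 i_2 \ldots i_k}(L_1, \ldots, L_k).$$
The tight-generator condition forces the first $i_j$ entries of $d(L^\star, L_j)$, viewed as a dominant coweight of $GL_n$, to all equal $-c(L^\star, L_j)$. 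The same calculation as in the $k=3$ proof then gives
$$d_{i_j}(L^\star, L_j) \;=\; -i_j\, c(L^\star, L_j) + \frac{i_j}{n}\bigl(\val(\det(L_j)) - \val(\det(L^\star))\bigr),$$
and summing over $j$, using $\sum i_j = n$ together with the renormalization defining $f^t_{i_1 i_2 \ldots i_k}$, yields $\sum_j d_{i_j}(L^\star, L_j) = f^t_{i_1 i_2 \ldots i_k}(x_1, \ldots, x_k)$. This exhibits a specific $p = L^\star$ achieving the value $f^t_{i_1 i_2 \ldots i_k}$, so $\min_p \sum_j d_{i_j}(p, x_j) \leq f^t_{i_1 i_2 \ldots i_k}$.

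For the reverse inequality, I would establish a generalized easy inequality: for any lattice $L$ and any vectors $u^{(j)}_\ell \in L_j$ with $1 \leq \ell \leq i_j$,
$$\val(\det(L)) \;\leq\; \val\bigl(\det\bigl(u^{(1)}_1, \ldots, u^{(1)}_{i_1}, \ldots, u^{(k)}_1, \ldots, u^{(k)}_{i_k}\bigr)\bigr) + \sum_{j=1}^{k}\sum_{\ell=1}^{i_j}\mu^{(j)}_\ell,$$
where $\mu^{(j)} = d(L, L_j)$ is a dominant coweight. In the special case $i_1 = \cdots = i_n = 1$, this reduces to the easy inequality stated just before Theorem~\ref{main}; in general it is strictly stronger. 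The proof is a multilinear algebra argument: for each $j$ choose a Smith-normal-form basis $f^{(j)}_1, \ldots, f^{(j)}_n$ of $\cK^n$ with $L = \langle f^{(j)}_k\rangle$ and $L_j = \langle t^{-\mu^{(j)}_k} f^{(j)}_k\rangle$, and form the wedge $\xi_j := u^{(j)}_1 \wedge \cdots \wedge u^{(j)}_{i_j} \in \wedge^{i_j} L_j$. In the Smith basis for $(L, L_j)$, every coefficient of $\xi_j$ (as a vector in $\wedge^{i_j} L$) has valuation at least $-\sum_{\ell=1}^{i_j}\mu^{(j)}_\ell$. The change-of-basis matrices between different Smith bases of $L$ lie in $GL_n(\cO)$, so that bound is preserved when $\xi_j$ is re-expressed in a common basis. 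Expanding $\xi_1 \wedge \cdots \wedge \xi_k$, which is proportional to $\det(u^{(1)}_1, \ldots, u^{(k)}_{i_k}) \cdot e_1 \wedge \cdots \wedge e_n$ in $\wedge^n \cK^n$, then yields the claimed valuation bound.

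Taking the maximum over the $u^{(j)}_\ell$ in the generalized easy inequality, and rewriting via the formula $d_{i_j}(L, L_j) = \sum_{\ell=1}^{i_j}\mu^{(j)}_\ell + \frac{i_j}{n}\bigl(\val(\det(L_j)) - \val(\det(L))\bigr)$, produces $\sum_j d_{i_j}(L, L_j) \geq f^t_{i_1 i_2 \ldots i_k}(x_1, \ldots, x_k)$ for every lattice $L$. Combined with the first step, this gives the desired equality $\min_p \sum_j d_{i_j}(p, x_j) = f^t_{i_1 i_2 \ldots i_k}(x_1, \ldots, x_k)$. The main obstacle is the generalized easy inequality, since the rest of the argument is a mechanical extension of the $k=3$ proof; the wedge product / Smith normal form computation is the new ingredient needed to handle arbitrary multiplicities $i_j \geq 1$.
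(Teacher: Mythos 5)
Your proof is correct, and the reverse inequality---which you rightly flag as the real obstacle---is a genuine addition to what the paper offers. The upper-bound half (producing the minimizing $L^\star$) is the paper's $k=3$ argument applied verbatim to the multiset of $n$ lattices. For the lower bound, note that the paper's easy inequality $\val(\det(L)) + \sum_l c(L,L_l) \le A$, applied to the multiset, only gives $\val(\det(L)) - \sum_j i_j\mu^{(j)}_1 \le A$, where $\mu^{(j)} = d(L,L_j)$ and $c(L,L_j) = -\mu^{(j)}_1$. But since for a general $L$ one has $d_{i_j}(L,L_j) = \sum_{\ell\le i_j}\mu^{(j)}_\ell + \tfrac{i_j}{n}\bigl(\val(\det L_j) - \val(\det L)\bigr)$, the desired bound $\sum_j d_{i_j}(L,x_j) \ge f^t_{i_1\dots i_k}$ is equivalent to $\val(\det(L)) - \sum_j\sum_{\ell\le i_j}\mu^{(j)}_\ell \le A$, which is \emph{strictly stronger} than what the paper's easy inequality yields, because $\sum_{\ell\le i_j}\mu^{(j)}_\ell \le i_j\mu^{(j)}_1$. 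The paper's proof of the $k=3$ case never addresses this direction, so your ``generalized easy inequality'' fills an actual gap, not just an omission of routine detail. Your Smith-normal-form / wedge-product proof of it is sound: the coordinate vector of $\xi_j \in \wedge^{i_j}L_j$ with respect to the $j$-th Smith basis has all entries of valuation $\ge -\sum_{\ell\le i_j}\mu^{(j)}_\ell$; passing to a common basis of $L$ multiplies by $\wedge^{i_j}$ of a $GL_n(\cO)$ matrix, which preserves the bound; and wedging the $\xi_j$ gives the required bound on $\val(\det(u)) - \val(\det(L))$. One cosmetic slip: $\xi_j$ is not generally an element of $\wedge^{i_j}L$; what you are bounding is its coordinate vector relative to a basis of $\wedge^{i_j}L$. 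With that understood, your argument is complete and makes the paper's ``it is not hard to see'' assertion rigorous.
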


\subsection{Relationship to the Duality Conjectures}

We will now specialize to the case of positive configurations of points in the affine building, which give tropical points of $\A_{G,S}$ in the case that $S$ is a disc with marked points. First we recall the definition of a positive configurations in the affine building, which is a particular type of \emph{higher lamination}.

\begin{definition} Take $m$ points of the real affine Grassmannian, $$x_1, x_2, \dots x_m.$$
This configuration of points will be called a \emph{positive configuration} of points in the affine Grassmannian if and only if we have a collection of ordered bases for each $x_i$,
$$v_{i1}, v_{i2}, \dots, v_{in}$$
such that for each triple of integers $p, q, r$, $1 \leq p < q <r \leq m$, and each triple of non-negative integers $i, j, k$ such that $i+j+k=n$,
\begin{itemize}
\item $f_{ijk}^t (x_p,x_q,x_r) = -\val(\det(v_{p1}, \dots, v_{pi}, v_{q1}, \dots v_{qj}, v_{r1}, \dots, v_{rk}))$, and moreover
\item the leading coefficient of $\det(v_{p1}, \dots, v_{pi}, v_{q1}, \dots v_{qj}, v_{r1}, \dots, v_{rk})$ is positive.
\end{itemize}
\end{definition}

We can use the same definition when $G=SL_n$ or $PGL_n$.

\begin{rmk} It is sufficient to check the above two conditions for only those triples $p, q, r$ that are vertices of a triangle in a particular triangulation of the $m$-gon. If the conditions hold in one triangulation, they hold in any other triangulation.
\end{rmk}

The duality conjectures of Fock and Goncharov concern two dual spaces $\A_{G,S}$ and $\X_{\vG,S}$, where $\vG$ is the Langlands dual group of $G$. The conjectures roughly state that the tropical points of one space parameterize a canonical basis of functions in the other space. This means that, for example, $\X_{\vG,S}(\Zt)$ parameterizes a basis of functions for $\A_{G,S}$. This bijection satisfies many compatibility relations which we will not discuss here. These conjectures have been proved in many cases by Goncharov and Shen \cite{GS2}, building on the work of Gross, Hacking, Keel and Kontsevich \cite{GHKK}.

The duality conjectures further imply that there should be a pairing between tropical spaces:
$$\X_{\vG,S}(\Zt) \times \A_{G,S}(\Zt) \rightarrow \Z.$$
Let us explain how to get this pairing. A tropical point $l \in \X_{\vG,S}(\Zt)$ corresponds to a function $f_l$ on $\A_{G,S}$ by duality conjectures. A point $l' \in \A_{G,S}(\Zt)$ arises by taking valuations of some (positive) Laurent-series valued point $x_{l'} \in \A_{G,S}(\cK)$:
$$-\val(x_{l'}) = l'.$$
Then we get a pairing $\mathcal{I}$:
$$\mathcal{I}(l,l')=-\val f_l(x_{l'}).$$
The value of $\mathcal{I}(l,l')$ is independent of the choice of the point $x_{l'}$, because $f_l$ should be a Laurent polynomial in the cluster coordinates.

Alternatively, we can perform the dual construction. A lamination $l' \in \A_{G,S}(\Zt)$ corresponds to a function $f_{l'}$ on $\X_{\vG,S}$. Then 
$$\mathcal{I}(l,l')=-\val f_{l'}(x_{l})$$
where $-\val(x_{l}) = l$ for $x_{l} \in \X_{vG,S}(\cK)$. Surprisingly, these dual constructions conjecturally give the same answer.

The pairing $\mathcal{I}$ can be interpreted as an \emph{intersection pairing} between higher laminations, once one identifies the tropical spaces $\X_{\vG,S}(\Zt)$ and $\A_{G,S}(\Zt)$ with higher laminations for the groups $\vG$ and $G$, respectively. When $G=SL_2$, $\mathcal{I}$ specializes to the usual intersection pairing between $\A$- and $\X$-laminations on a surface $S$ (\cite{FG1}).

The functions $f_{ijk}$ are particular examples of cluster variables on $\A_{G,S}$. Cluster variables on $\A_{G,S}$--in fact, cluster monomials--are conjecturally part of the canonical basis parameterized by $\X_{\vG,S}(\Zt)$. Consider a cluster for the space $\A_{G,S}$ consisting of functions $a_1, \dots, a_N$. Let the corresponding cluster $\X$-variables on the space $\X_{\vG,S}$ be $x_1, \dots, x_N$. (There is a notion of ``corresponding variable'' because $\A_{G,S}$ and $\X_{\vG,S}$ are part of dual cluster ensembles.) Then for integers $d_i \geq 0$ the cluster monomial 
$$a_1^{d_1} \cdots a_N^{d_N}$$
should correspond to the tropical point in $(d_1, \dots, d_N) \in \X_{\vG,S}(\Zt)$ in the coordinate chart $x_1, \dots, x_N$.

Theorem ~\ref{main} gives a way of computing $f^t_{ijk}(l')$ for any point $l' \in \A_{G,S}(\Zt)$. Thus it gives a geometric interpretation of the intersection pairings. For any cluster chart coming from a triangulation of $S$, the associated cluster monomials are parameterized by a cone in $\X_{\vG,S}(\Zt)$. We therfore understand the pairing between a union of cones in $\X_{\vG,S}(\Zt)$ and the whole space $\A_{G,S}(\Zt)$.

One would like to have a geometric interpretation of these intersection pairings in general, for all points of $\X_{\vG,S}(\Zt)$ and $\A_{G,S}(\Zt)$.

We would like to mention a surprising consequence. First recall that the functions $f_{ijkl}$ satisfy various identities, for example, for $i+j+k+l=n$, we have:

$$f_{ijkl} f_{i+1, j-1, k+1, l-1} = f_{i, j, k+1, l-1} f_{i+1, j-1, k, l} + f_{i+1, j, k, l-1} f_{i, j-1, k+1, l}.$$

Then we can tropicalize this to get that evaluating the three functions

$$f^t_{ijkl} + f^t_{i+1, j-1, k+1, l-1},$$
$$f^t_{i, j, k+1, l-1} + f^t_{i+1, j-1, k, l},$$ 
$$f^t_{i+1, j, k, l-1} + f^t_{i, j-1, k+1, l}$$
on four points in the affine building gives three numbers such that the two largest of these numbers are equal. This statement seems fairly non-trivial if we use the metric interpretation of the functions $f^t_{ijkl}$.

\section{Generalizations}

We explained in the previous section how our metric formula for the function $f_{ijk}^t$ gave a way to compute intersection pairings between a subset of $\X_{\vG,S}(\Zt)$ and $\A_{G,S}(\Zt)$. The key observation was that if $f$ is a cluster variable in some cluster, then giving a metric interpretation of $f^t$ gives us a way of computing some set of intersection pairings. More precisely, if $f_l$ is the function on $\A_{G,S}(\Zt)$ which corresponds to the tropical point $l \in \X_{\vG,S}(\Zt)$, then giving a metric interpretation of $f_l^t$ is the same as computing $\I(l, -)$. Our goal for this section will therefore be to give a metric formula for $f^t$ for several more instances where $f$ is a cluster variable.

One can show using the sequence of mutations for a flip and an inductive argument that the functions 
$$f_{i_1i_2\dots i_k}$$
are cluster variables. Then Theorem~\ref{basicfn} gives a metric formula for $$f_{i_1i_2\dots i_k}^t.$$

We will now show how this can be further extended to other cluster variables.

\subsection{Tropicalization of Functions}

We begin by recalling from \cite{GS} and \cite{Le} how to evaluate $f^t$ on a higher lamination for a cluster variable $f$. Our treatment will be slightly different from those papers, and will be tailored to our particular goals.

Let $\A$ denote the variety of principal affine flags, $G/U$. The space of configurations of $m$ flags, denoted $\Conf_m (\A)$, the quotient of $(G/U)^n$ by the diagonal action of $G$. 

It is well-known that the functions on $\A_G$ are naturally isomorphic to 
$$\bigoplus_{\lambda \in \Lambda_+} V_{\lambda}$$
as a $G$-representation under the left action of $G$. Moreover, it is a fact that any cluster variable $f$ in the space of functions $\mathcal{O}(\Conf_m (\A))$ is given by an invariant in
$$[V_{\lambda_1}^* \otimes V_{\lambda_2}^* \otimes \cdots \otimes V_{\lambda_m}^*]^G$$
for some set of dominant weights $\lambda_i$. Here, $V^*$ is the representation dual to $V$. We use the dual representations here for convenience.

For example, the functions $f_{ijk}$ defined previously are given by invariants in 
$$[V_{\omega_i}^* \otimes V_{\omega_j}^* \otimes V_{\omega_k}^*]^G$$

A point in $\A$ gives a compatible family of vectors $v_{\mu} \in V_{\mu}$ for all highest weights $\mu$. A point in $(G/U)^n$ gives a vectors  
$$v_{\mu_1} \otimes v_{\mu_2} \otimes \cdots \otimes v_{\mu_m} \in V_{\mu_1} \otimes V_{\mu_2} \otimes \cdots \otimes V_{\mu_m}$$
for all $m$-tuples of highest weights $(\mu_1, \mu_2, \dots, \mu_m)$. Up to the action of $G$, we get a vector in
$$[V_{\mu_1} \otimes V_{\mu_2} \otimes \cdots \otimes V_{\mu_m}]^G.$$
Then the function $f$ evaluated on a point in $\Conf_m (\A)$ is then given by contracting $f$ with the vector 
$$v_{\lambda_1} \otimes v_{\lambda_2} \otimes \cdots \otimes v_{\lambda_m}.$$

Let us now describe how to tropicalize $f$. 
A point in the affine Grassmannian of $G$ gives us not only a lattice, but a lattice in every representation of $G$. Let $x_1, \dots, x_m$ be a configuration of points in the affine Grassmannian. For each $x_i$, we have a lattice $L_i \subset V_{\lambda_i} \otimes \mathcal{K}$. We maximize the expression
$$-\val(f(v_1, v_2, \dots, v_m))$$
over vectors $v_i \in L_i$. This maximum value will give $f^t(x_1, x_2, \dots, x_m)$.

\subsection{Some examples}

Now, let us first make an easy observation. Because the outer automorphism of $SL_n$ acts on everything in sight, we have a dual statement to Theorem~\ref{basicfn}. Note that the outer automorphism of $SL_n$ takes any representation to its dual representation, and hence interchanges the weights $\omega_a$ and $\omega_{n-a}$. Now suppose that $i_1+i_2+\cdots+i_k=(k-1)n$. Then if we put $j_s=n-i_s$, we have that $j_1+j_2+\cdots+j_k = n$. We can then define the function
$$f_{i_1i_2\dots i_k}$$
which dual to the function $f_{j_1j_2\dots j_k}$. These functions also turn out to be cluster variables \cite{GS2}, \cite{Le2}.

For points $x_1, x_2, \dots, x_k$ in the affine building for $PGL_n$ or $SL_n$, we can define the functions
$$f_{i_1i_2\dots i_k}^t(x_1,x_2\dots,x_k).$$
by the procedure given in the previous section. Then we have that
\begin{theorem}
$$f_{i_1i_2\dots i_k}^t(x_1,x_2\dots,x_k)= \min_{p} d_{i_1}(p,x_1) + d_{i_2}(p,x_2) + \cdots + d_{i_k}(p,x_k)$$
where the minimum is taken over all $p$ in the affine Grassmannian for $PGL_n$.
\end{theorem}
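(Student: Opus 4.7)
The plan is to reduce this theorem to Theorem~\ref{basicfn} via the outer automorphism $\sigma$ of $SL_n$ (equivalently $PGL_n$), realized on $G(\cK)$ by $g \mapsto w_0 (g^T)^{-1} w_0^{-1}$; this is precisely the ``acts on everything in sight'' automorphism alluded to just before the statement. It descends to an involution of the affine Grassmannian and of the affine building, and on representations it interchanges $V_{\omega_i}$ with $V_{\omega_{n-i}} \cong V_{\omega_i}^*$, which is exactly the duality used in the paper to \emph{define} $f_{i_1\dots i_k}$ when $\sum i_s = (k-1)n$ as the dual of the function $f_{j_1\dots j_k}$ with $j_s = n - i_s$ and $\sum j_s = n$.

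The first step I would carry out is to verify the distance identity $d(\sigma L, \sigma M) = -w_0\, d(L, M)$, which follows because $\sigma$ acts on the standard torus by the composition of $w_0$ with inverse-transpose. Pairing with $\omega_i$ and computing $\omega_i \cdot (-w_0 \mu) = \omega_{n-i} \cdot \mu$ (a direct calculation using $\sum \mu_s = 0$ in the $SL_n$ coweight lattice, which carries over to $PGL_n$ coweights via the hyperplane lift introduced earlier in the paper) yields the key identity
\[
  d_i(\sigma L, \sigma M) \;=\; d_{n-i}(L, M).
\]
The second step is to check that the tropicalization procedure is equivariant: because $\sigma$ sends the lattice in $V_\lambda \otimes \cK$ attached to $x$ to the lattice in $V_{-w_0 \lambda} \otimes \cK$ attached to $\sigma(x)$, and because by construction $f_{i_1\dots i_k}$ and $f_{n-i_1,\dots, n-i_k}$ correspond to the same $G$-invariant transported along $\sigma$, one obtains
\[
  f_{i_1\dots i_k}^t(x_1,\dots,x_k) \;=\; f_{n-i_1,\dots,n-i_k}^t(\sigma(x_1),\dots,\sigma(x_k)).
\]

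Once these two identities are established the remainder is formal. Since $\sum_s (n - i_s) = n$, Theorem~\ref{basicfn} applies to the right-hand side and rewrites it as $\min_p \sum_s d_{n-i_s}(p, \sigma(x_s))$; the substitution $p = \sigma(q)$, a bijection on the affine Grassmannian of $PGL_n$, together with the distance identity above, converts this minimum to $\min_q \sum_s d_{i_s}(q, x_s)$, which is the desired expression. The main point of care will be the $PGL_n$ bookkeeping: one needs to check that $\sigma$ respects the scaling equivalence on lattices and that the renormalization appearing in Equation~\ref{PGLfunction} is $\sigma$-invariant, so that the tropical equivariance holds on the nose rather than only up to an additive constant. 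Once this routine verification is performed, the theorem follows.
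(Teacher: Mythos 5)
Your proof is correct and carries out precisely the route the paper intends: the paper itself gives no formal argument beyond the preceding paragraph on the outer automorphism (``This is an easy theorem given the above discussion''), and your proposal simply makes that sketch explicit — the pinned outer automorphism $\sigma$, the identity $d(\sigma L, \sigma M) = -w_0\, d(L,M)$, the resulting $d_i(\sigma L, \sigma M) = d_{n-i}(L,M)$ (using $\omega_i \cdot(-w_0\mu) = \omega_{n-i}\cdot\mu$ on the trace-zero hyperplane), the equivariance of the tropicalization coming from $f_{i_1\dots i_k} = f_{j_1\dots j_k}\circ\sigma$, and the change of variables $p = \sigma(q)$ in the minimum after invoking Theorem~\ref{basicfn}. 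The $PGL_n$ bookkeeping you flag is indeed the only point requiring care, and your treatment of it is sound since $\sigma$ commutes with scaling and the normalization in Equation~\ref{PGLfunction} is visibly $\sigma$-invariant.
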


This is an easy theorem given the above discussion, but it motivates the first example that goes beyond the results of this paper. 

We will consider a function in the cluster algebra for $\Conf_4(\A)$ constructed in \cite{Le2}. Let $1 \leq a, b, c, d < n$ be four integers satisfying $a + b > n$ and $a+b+c+d=2n$. Then there an invariants inside $$[V_{\omega_a} \otimes V_{\omega_b}  \otimes V_{\omega_c} \otimes V_{\omega_d}]^{SL_n}$$ given by the web in below.

\vspace{5mm}

\begin{center}
\begin{tikzpicture}[scale=.8]
\begin{scope}[decoration={
    markings,
    mark=at position 0.5 with {\arrow{>}}},
    xshift=-8cm, yshift=-2cm
    ] 
\draw [postaction={decorate}] (-3,3) -- (-1.5,1.5) node [midway,below left] {$a$}; 
\draw [postaction={decorate}] (0,0) -- (-1.5,1.5) node [midway,below left] {$n-a$};
\draw [postaction={decorate}] (6,3) -- (4.5,1.5) node [midway,below right] {$d$};
\draw [postaction={decorate}] (3,0) -- (4.5,1.5) node [midway,below right] {$n-d$};
\draw [postaction={decorate}] (-3,-3) -- (0,0) node [midway, above left] {$b$};
\draw [postaction={decorate}] (6,-3) -- (3,0) node [midway, above right] {$c$};
\draw [postaction={decorate}] (0,0) -- (3,0) node [midway, below] {$a+b-n$};

\draw (4.3,1.7) -- (4.5,1.5) ;
\draw (-1.7,1.3) -- (-1.5,1.5) ;
\end{scope}

\end{tikzpicture}
\end{center}

Here is a more concrete description of the function. Given four flags 
$$t_1, \dots, t_n;$$
$$u_1, \dots, u_n;$$
$$v_1, \dots, v_n;$$
$$w_1, \dots, w_n;$$
first consider the forms
$$T_a := t_1 \wedge \cdots \wedge t_a,$$
$$U_b := u_1 \wedge \cdots \wedge u_b,$$
$$V_c := v_1 \wedge \cdots \wedge v_c,$$ 
$$W_d := w_1 \wedge \cdots \wedge w_d.$$
There is a natural map 
$$\phi_{a+b-n, n-a}: \bigwedge\nolimits^{b} V \rightarrow \bigwedge\nolimits^{a+b-n} V \otimes \bigwedge\nolimits^{n-a} V.$$
There are also natural maps 
$$W_d \wedge - \wedge V_c :  \bigwedge\nolimits^{a+c-n} V \rightarrow \bigwedge\nolimits^{n} V \simeq F$$ and 
$$T_a \wedge - : \bigwedge\nolimits^{n-a} V \rightarrow \bigwedge\nolimits^{n} V \simeq F.$$
Applying these maps to the first and second factors of $\phi_{a+c-n,n-a}(U_b)$, respectively, and then multiplying, we get get the value of our function. This is a function on $\Conf_4 \A_{SL_{n}}$. Let us call this function $F$.

\begin{conj} $F^t(x_1,x_2,x_3,x_4)$ is given by the minimum value of
$$d_a(p,x_1) + d_b(p,x_2) + d_{a+b-n}(q,p) + d_c(q,x_3) + d_d(q,x_4)$$
over $p$ and $q$ in the affine building for $PGL_n$.
\end{conj}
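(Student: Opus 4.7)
The plan is to recognize $F$ as a tensor contraction of two basic trivalent invariants, prove a ``tropical contraction'' identity reducing $F^t$ to a min-plus convolution of the two tropical basic functions, and then eliminate the intermediate variable using the triangle inequality in the affine building.

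First I would read off from the web that $F = F_R \circ F_L$, where $F_L : \bigwedge^a V \otimes \bigwedge^b V \to \bigwedge^{a+b-n} V$ applies $\phi_{a+b-n,n-a}$ to $U_b$ and then wedges the second factor against $T_a$, while $F_R : \bigwedge^c V \otimes \bigwedge^d V \to (\bigwedge^{a+b-n} V)^{*}$ is the map ``wedge with $V_c \wedge W_d$ and identify $\bigwedge^n V$ with $\F$''. Under the duality $V_{\omega_k}^{*} \cong V_{\omega_{n-k}}$ the invariant $F_L$ is a dual-type tripod of weights $(\omega_a, \omega_b, \omega_{2n-a-b})$ summing to $2n$, while $F_R$ is a standard tripod of weights $(\omega_{a+b-n}, \omega_c, \omega_d)$ summing to $n$. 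By Theorem~\ref{basicfn} and its dual-type analogue from the previous subsection, both admit the metric formulas $F_L^t(x_1,x_2,y) = \min_p [d_a(p,x_1) + d_b(p,x_2) + d_{2n-a-b}(p,y)]$ and $F_R^t(y,x_3,x_4) = \min_q [d_{a+b-n}(q,y) + d_c(q,x_3) + d_d(q,x_4)]$.

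The core step is the tropical contraction identity
\[
F^t(x_1,x_2,x_3,x_4) \;=\; \min_{y} \bigl[\, F_L^t(x_1,x_2,y) + F_R^t(y,x_3,x_4) \,\bigr],
\]
where $y$ runs over the affine Grassmannian for $PGL_n$. The $\le$ direction is routine: for any lattice $N$ representing $y$, the canonical pairing between $\bigwedge^{a+b-n} V$ and its dual satisfies $\val\langle \alpha,\beta\rangle \ge \val_N \alpha + \val_{N^{\vee}} \beta$, and since the $(v_1,v_2)$ and $(v_3,v_4)$ contributions decouple, maximizing in each factor independently gives the bound. The $\ge$ direction is the main obstacle: one must produce some $y$ and vectors $v_1,\dots,v_4$ that simultaneously realize the supremum of $-\val F$, which requires the top components of $F_L(v_1,v_2) \bmod tN$ and $F_R(v_3,v_4) \bmod tN^{\vee}$ to pair non-trivially. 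I would establish this by extending the algorithm in the proof of Theorem~\ref{main} to alternate between modifications of $N$ and of the two tripod optima, using the ``systems of linearly independent representatives'' machinery in $N/tN$ and $N^{\vee}/tN^{\vee}$ simultaneously until sharpness holds. For positive configurations the argument simplifies considerably, because sign-coherence of leading coefficients rules out cancellation in the sum that defines $F$.

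Having established the contraction identity, substitute the metric formulas for $F_L^t$ and $F_R^t$ to obtain a triple minimum over $p,q,y$, and eliminate $y$ using the coweight-valued triangle inequality in the building. From $w_0 \omega_k = -\omega_{n-k}$ for $SL_n$ one derives $d_k(q,p) = d_{n-k}(p,q)$, so $d_{2n-a-b}(p,y) = d_{a+b-n}(y,p)$ and the $y$-dependent part becomes $d_{a+b-n}(q,y) + d_{a+b-n}(y,p)$. Since $\omega_{a+b-n}$ is dominant and $d(q,y) + d(y,p) - d(q,p)$ lies in the positive coroot cone, this pairs to give $d_{a+b-n}(q,y) + d_{a+b-n}(y,p) \ge d_{a+b-n}(q,p)$, with equality at $y = p$. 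Minimizing over $y$ collapses the expression to the conjectured formula.
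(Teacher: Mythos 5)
The statement you are trying to prove is stated in the paper only as a conjecture, with no proof given, so there is nothing to compare your argument against; what I can do is assess it on its own terms.

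Your reduction strategy is sound and, as far as I can tell, correct in all the parts you actually carry out. The decomposition of the web invariant as a contraction $F = \langle \tilde F_L(v_1,v_2), \tilde F_R(v_3,v_4)\rangle$ through $\bigwedge^{a+b-n}V$ is right; identifying $F_L$ as a dual-type tripod with weights $(\omega_a,\omega_b,\omega_{2n-a-b})$ (indices summing to $2n$) and $F_R$ as an ordinary tripod with weights $(\omega_{a+b-n},\omega_c,\omega_d)$ (indices summing to $n$) matches the two theorems in Sections 4.3 and 5.2, so the metric formulas you quote for $F_L^t$ and $F_R^t$ are legitimately available. The ``$\le$'' half of your tropical contraction identity is genuinely routine by exactly the argument you sketch (pairing a lattice $N$ with its dual $N^\vee$ in the exterior power gives $-\val\langle\alpha,\beta\rangle \le c(\alpha,N)+c(\beta,N^\vee)$, and the two factors decouple). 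And the final elimination of $y$ is clean: $d_k(q,p)=d_{n-k}(p,q)$ follows from $d(q,p)=-w_0\,d(p,q)$ and $-w_0\omega_k=\omega_{n-k}$, the coweight-valued triangle inequality paired against the dominant weight $\omega_{a+b-n}$ gives $d_{a+b-n}(q,y)+d_{a+b-n}(y,p)\ge d_{a+b-n}(q,p)$, and $y=p$ gives equality. So the conjecture is correctly reduced to your contraction identity.

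The gap is exactly where you flag it: the ``$\ge$'' direction of the contraction identity is not proved. Saying you ``would establish this by extending the algorithm in the proof of Theorem~3.2'' is a plan, not an argument, and it is not obviously implementable. Theorem~3.2 is specific to functions of determinant type, where the thing being valuated is a single $n\times n$ minor with one vector drawn from each of $n$ input lattices; the algorithm proceeds by bumping up a candidate lattice $L$ using linear independence in $L/tL$. For the web invariant $F$ you instead need a lattice $N$ such that three distinct optimization problems --- for $F^t$, $F_L^t$, and $F_R^t$ --- are simultaneously saturated with compatible optimizers, and moreover $N$ must be forced to come from a lattice in the standard representation (a point of the affine Grassmannian), not just an arbitrary full-rank $\cO$-module in $\bigwedge^{a+b-n}V\otimes\cK$. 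Neither of these constraints is addressed by the mechanism of Theorem~3.2, and the ``sign-coherence for positive configurations'' remark does not help here since the conjecture is stated for arbitrary configurations. Until the $\ge$ direction is actually established, this is a reduction of the conjecture, not a proof of it.

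Two smaller remarks. First, be explicit that the intermediate variable $y$ in the contraction identity is constrained to the affine Grassmannian for $PGL_n$; this is implicit in how $F_L^t$ and $F_R^t$ are defined (lattices in $\bigwedge^{a+b-n}V$ are induced from lattices in $V$), but it is worth stating, since it is exactly what makes the $\ge$ direction hard. Second, the reduction makes clear that the conjectured formula is a minimum over a pair $(p,q)$ --- a metric tree with one internal edge of weight $a+b-n$ embedded in the building --- which is the natural ``network'' generalization of the single-Steiner-point picture of Theorem~4.6; this is a useful observation, and you might state it as the conceptual takeaway of the reduction.
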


In other words, the function $F^t$ is given by the minimal weighted distance over a graph embedded in the building. The leaves of the graph are prescribed to land on the points $x_1, x_2, x_3, x_4$, while the weights are determined by the web calculating the function $F$.

\end{document}